\begin{document}

\newcommand{\REMARK}[1]{\marginpar{\tiny #1}}
\newtheorem{thm}{Theorem}[section]
\newtheorem{lemma}[thm]{Lemma}
\newtheorem{corol}[thm]{Corollary}
\newtheorem{propo}[thm]{Proposition}
\newtheorem{defin}[thm]{Definition}
\newtheorem{Remark}[thm]{Remark}
\numberwithin{equation}{section}

\newtheorem{notas}[thm]{Notations}
\newtheorem{nota}[thm]{Notation}
\newtheorem{defis}[thm]{Definitions}
\newtheorem*{thm*}{Theorem}

\def\Tm{{\mathbb T}}
\def\Am{{\mathbb A}}
\def\Fm{{\mathbb F}}
\def\Qm{{\mathbb Q}}
\def\Zm{{\mathbb Z}}
\def\Cm{{\mathbb C}}
\def\Rm{{\mathbb R}}
\def\IC{{\mathcal I}}
\def\OC{{\mathcal O}}

\def \art{\mathop{\mathrm{Art}}\nolimits}
\def \speh {\mathop{\mathrm{Speh}}\nolimits}
\def \groth {\mathop{\mathrm{Groth}}\nolimits}
\def \val {\mathop{\mathrm{val}}\nolimits}
\def \det {\mathop{\mathrm{det}}\nolimits}
\def \spec {\mathop{\mathrm{Spec}}\nolimits}
\def \diag {\mathop{\mathrm{diag}}\nolimits}
\def \lie {{\mathop{\mathrm{Lie}}\nolimits}}
\def \ind {\mathop{\mathrm{ind}}\nolimits}
\def \st {{\mathop{\mathrm{St}}\nolimits}}
\def \gr {{\mathop{\mathrm{gr}}\nolimits}}
\def \Spl {{\mathop{\mathrm{Spl}}\nolimits}}
\def \bad{\mathop{\mathrm{Bad}}\nolimits}
\def \unr{\mathop{\mathrm{Unr}}\nolimits}
\def \rem{{\noindent\textit{Remark:~}}}

\def \hi{\HC}

\setcounter{secnumdepth}{3} \setcounter{tocdepth}{3}

\def \Fil{\mathop{\mathrm{Fil}}\nolimits}

\let \oo=\infty

\newcommand\atopp[2]{\genfrac{}{}{0pt}{}{#1}{#2}}

\title{Torsion classes in the cohomology of KHT Shimura varieties}

\author{Boyer Pascal}

\address{Universit\'e Paris 13, Sorbonne Paris Cit\'e \\
LAGA, CNRS, UMR 7539\\ 
F-93430, Villetaneuse (France) \\
PerCoLaTor: ANR-14-CE25}

\email{boyer@math.univ-paris13.fr}


\begin{abstract}
A particular case of Bergeron-Venkatesh's conjecture predicts 
that torsion classes in the cohomology of Shimura varieties are rather rare.
According to this and for Kottwitz-Harris-Taylor type of Shimura varieties, we first associate to each 
such torsion class an infinity of irreducible automorphic representations in characteristic zero, 
which are pairwise non isomorphic and weakly congruent in the sense of \cite{vigneras-langlands}.
Then, using completed cohomology, we construct torsion classes in regular weight 
and then deduce explicit examples of such automorphic congruences.

\end{abstract}

\subjclass{11G18,14G35, 11F70, 11F80}

\keywords{Shimura varieties, torsion in the cohomology, maximal ideal of the Hecke algebra,
localized cohomology, galois representation}

\maketitle

\pagestyle{headings} \pagenumbering{arabic}

\tableofcontents
%
%
\renewcommand{\theequation}{\arabic{section}.\arabic{subsection}.\arabic{thm}}

\section*{Introduction}
\renewcommand{\thethm}{\arabic{section}.\arabic{subsection}.\arabic{thm}}

Let $F=EF^+$ be a CM field and $B/F$ a central division algebra of dimension $d^2$ 
equipped with an involution of
second kind: we can then define a group of similitudes $G/\Qm$ as explained in \S \ref{para-geo}
whose unitary associated group has signature $(1,d-1)$ at one real place and $(0,d)$ at the others.
We denote $X_{I,\eta} \rightarrow \spec F$ the Shimura variety of 
Kottwitz-Harris-Taylor type associated to $G/\Qm$ and an open compact subgroup $I$. 
For a fixed prime number $l$, consider the set $\Spl(I)$ of places $v$ of $F$ over
a prime number $p \neq l$ such that 
\begin{itemize}
\item $p=uu^c$ is split in the quadratic imaginary extension $E/Q$,

\item $G(\Qm_p)$ is split, i.e. of the following shape $\Qm_p^\times \times \prod_{w |u} (B^{op}_w)^\times$,

\item the local component at $p$ of $I$, is maximal,

\item $v|u$ and $B_v^\times \simeq GL_d(F_v)$.
\end{itemize}
Given an irreducible algebraic representation $\xi$ of $G$ which gives a $\overline \Zm_l$-local system
$V_{\xi, \overline{\mathbb Z}_l}$ over $X_{I,\bar \eta}$, if we believe in the general conjectures of \cite{BV}, and as the defect
equals $0$, asymptotically as the level $I$ increases, the torsion cohomology classes in 
$H^i(X_{I,\bar \eta},V_{\xi, \overline{\mathbb Z}_l})$ are rather rare.
In this direction, the main theorem of \cite{boyer-imj} gives  
a way to cancel this torsion by imposing for a place $v \in \Spl(I)$ that the multiset of modulo $l$ Satake parameters 
does not contain a sub-multiset of the form $\{ \alpha,q_v \alpha \}$  where $q_v$ is the cardinality of the residual field 
$\kappa(v)$ at $v$:
put another way, for a torsion cohomology class to exist, the associated set of modulo $l$
Satake parameters should, at every place $v \in \Spl(I)$, contain a subset of the shape $\{ \alpha, q_v \alpha \}$. 
In this paper, in the opposite direction, we are interested in cohomology torsion classes and their arithmetic
applications: the main result is corollary \ref{coro-principal} which can be stated as follow.

\begin{thm*} 
Let $\mathfrak m$ be a maximal ideal of some unramified Hecke algebra, associated to some
non trivial torsion class in the cohomology of $X_{I,\bar \eta}$ with coefficients in
$V_{\xi, \overline{\mathbb Z}_l}$: we denote by $i$ the greatest integer such that the torsion of
$H^{d-1-i}(X_{I,\bar \eta},V_{\xi, \overline{\mathbb Z}_l})_{\mathfrak m}$ is non trivial. There exists then a set
$$\Bigl \{ \Pi(v):~ v \in \Spl(I) \Bigr \}$$ 
of irreducible automorphic $\xi$-cohomological representations such that for all $w \in \Spl(I)$
distinct from $v$, the local component at $w$ of $\Pi(v)$ is unramified, its modulo $l$
Satake parameters being given by $\mathfrak m$. On the other hand,
$\Pi(v)$ is ramified at $v$ and more precisely, it's isomorphic to a representation of the following shape
$$\st_{i+2}(\chi_{v,0}) \times \chi_{v,1} \times \cdots \times \chi_{v,d-i-2}$$
where $\chi_{v,0},\cdots,\chi_{v,d-i-2}$ are unramified characters of $F_v$ and, see notation \ref{nota-ind}, 
the symbol $\times$ means normalized parabolic induction.
\end{thm*}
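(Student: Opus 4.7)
\medskip

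\noindent\textbf{Proof sketch.} Fix $v\in\Spl(I)$. The plan is to work with an integral model of $X_{I,\bar\eta}$ at $v$ whose special fiber admits the Newton stratification of Harris--Taylor type, and to extract $\Pi(v)$ from the way torsion at $\mathfrak m$ propagates through the nearby cycles spectral sequence. One enlarges the level at $v$ to allow all Iwahori/paramodular-type levels while staying unramified outside $v$, so that cohomology at deeper level controls ramified constituents of $\Pi(v)_v$.

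First I would exploit the filtration on $R\Psi V_{\xi,\overline{\mathbb Z}_l}$ whose graded pieces are shifts of the Harris--Taylor intermediate extensions $j^{=h}_{!*}$ of local systems associated with generalized Steinberg representations $\st_h(\pi_v)$ on the height-$h$ Newton stratum (of dimension $d-h$). Passing to hypercohomology, one obtains an $\mathfrak m$-equivariant spectral sequence whose $E_1^{p,q}$ involves the $\overline{\mathbb Z}_l$-cohomology of the Newton strata with coefficients in these Harris--Taylor sheaves. The hypothesis on $i$ is that the abutment has torsion in degree $d-1-i$, while no torsion persists in any strictly higher cohomological degree $d-1-j$ with $j<i$. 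This maximality forces the nonzero torsion differential to land in a graded piece attached to the stratum of dimension $i+1$, i.e.\ height $h=d-i-1$, and more precisely in the piece indexed by the $\st_{i+2}$ constituent of the induced representation on the corresponding Levi $GL_{i+2}\times GL_{d-i-2}$.

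Next, a Bockstein/long-exact-sequence argument turns this torsion class at $\mathfrak m$ into a pair of characteristic-zero cohomology classes congruent modulo $l$: after tensoring with $\overline{\mathbb Q}_l$, the relevant graded piece computes, via \cite{boyer-imj} and standard Mantovan-type formulas, the $\pi_v$-isotypic components of irreducible automorphic representations whose local component at $v$ is precisely $\st_{i+2}(\chi_{v,0})\times\chi_{v,1}\times\cdots\times\chi_{v,d-i-2}$ with unramified $\chi_{v,j}$. Any irreducible constituent appearing in this isotypic piece and carrying an $\mathfrak m$-eigenform is a candidate for $\Pi(v)$; by construction its Hecke eigensystem away from $v$ reduces to $\mathfrak m$, so it is unramified at every $w\in\Spl(I)\setminus\{v\}$ with the Satake parameters prescribed by $\mathfrak m$. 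Cohomologicality with respect to $\xi$ is inherited from the coefficient system $V_{\xi,\overline{\mathbb Z}_l}$.

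The main obstacle is the compatibility between the index $i$ (read off from the abutment) and the specific index $i+2$ of the Steinberg factor. It is not automatic: one has to show that the stratum of dimension $i+1$ is really the first one that can support a non-split Bockstein, using the vanishing of torsion in degrees $d-1-j$ for $j<i$ to rule out contributions from strata of dimensions $<i+1$, and using the results of \cite{boyer-imj} on non-existence of torsion at $\mathfrak m$'s that do not contain a pair $\{\alpha,q_v\alpha\}$ to guarantee that the Steinberg-type graded piece is the only one that can receive the torsion. Once this matching is established, running the argument independently for every $v\in\Spl(I)$ yields the family $\{\Pi(v):v\in\Spl(I)\}$.
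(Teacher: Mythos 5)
You have the right general picture (use the Newton stratification at $v$, let the maximality of $i$ localize the torsion, then lift it to characteristic zero and read off congruences), but the sketch contains genuine errors and leaves the decisive step unproved. First, you invert the hypothesis: $i$ being the \emph{greatest} integer with torsion in $H^{d-1-i}(X_{I,\bar\eta},V_{\xi,\overline{\mathbb Z}_l})_{\mathfrak m}$ means torsion vanishes in all degrees \emph{below} $d-1-i$ (i.e.\ for $j>i$), whereas you assert vanishing in the degrees above $d-1-i$ ($j<i$); the localization of the "torsion differential" that you base on this is therefore not available. Second, your identification of the relevant stratum is off: the correct degree count (and the paper's argument) places the torsion in the middle degree $d-h$ of the closed stratum $X^{\geq h}_{I,\bar s_v}$ of \emph{height} $h=i+1$ and dimension $d-1-i$, not on a stratum of dimension $i+1$ and height $d-i-1$; your choice is moreover inconsistent with your own Levi $GL_{i+2}\times GL_{d-i-2}$, since Harris--Taylor systems with a $\st_{i+2}$ factor are attached to the height-$(i+2)$ stratum. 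The fact that the type is $h+1=i+2$ rather than $h$ (or whatever index the stratum naively suggests) is precisely the delicate point, and it is exactly what you defer as "the main obstacle" without proof --- but that off-by-one matching \emph{is} the theorem.

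For comparison, the actual mechanism does not need an integral filtration of $R\Psi$ by intermediate extensions (whose $\overline{\mathbb Z}_l$-structure you invoke without justification), nor any raising of the level at $v$. One uses only the short exact sequence $0\to i_{h+1,*}V[d-h-1]\to j^{\geq h}_!j^{\geq h,*}V[d-h]\to V[d-h]\to 0$ together with the affineness of the open strata: Artin vanishing makes $H^{d-h}_c(X^{=h}_{I,\bar s_v},V_{\xi,\overline{\mathbb Z}_l})$ torsion-free and kills the lower $H_c$, while choosing $h$ maximal with $\mathfrak m$-torsion on $X^{\geq h}$ makes the cohomology of $X^{\geq h+1}$ free. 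The torsion of $H^{d-h}(X^{\geq h})_{\mathfrak m}$ is thus pinched between two free modules, hence is a subquotient of $\Gamma/\Gamma'$ for $\Tm_I$-stable lattices in explicitly computed $\overline{\mathbb Q}_l$-cohomology; taking $GL_d(\OC_v)$-invariants and using the compatibility with $H^{\bullet}(X^{\geq h+1})$ rules out the Speh-type and non-spherical constituents and forces the local component $\st_{h+1}(\chi_{v,0})\times\chi_{v,1}\times\cdots\times\chi_{v,d-h-1}$ (property \textbf{(P)} of type $h+1$). A descending induction on the stratum index, via the isomorphisms below middle degree coming from the same exact sequences, transports this to $X^{\geq 1}$, and smooth base change identifies it with the generic fibre, giving $h=i+1$ and the stated shape of $\Pi(v)_v$; running this at each $v\in\Spl(I)$ gives the family. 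Your Bockstein heuristic gestures at the lattice congruence but supplies neither the freeness statements nor the identification of the characteristic-zero constituents, so as written the proof does not go through.
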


\noindent \textit{Remarks:} \\
(a) As expected from the main result of \cite{boyer-imj}, for a $\mathfrak m$-torsion cohomology class to exist, 
the Satake parameters modulo $l$ at each $v \in \Spl(I)$ must contain a chain 
$\{ \alpha, q_v \alpha,\cdots ,q_v^{a-1} \alpha \}$ with $a \geq 2$. Moreover if you want to have torsion at distance $i>0$ from
middle degree then these sets of Satake parameters, for each $v \in \Spl(I)$, have to contain such a chain 
with length at least $i+2$.
\\
(b) As showed in \cite{scholze-cara}, torsion cohomology classes of compact Shimura varieties raise in characteristic $0$
and one may asks about the precise level. The previous theorem tells you that, for a torsion cohomology class in
$H^{d-1-i}$, beside the condition about the existence for every $v \in \Spl(I)$, of a chain $\{ \alpha, q_v \alpha, \cdots, \alpha^{i+1} \}$
of length $i+2$ inside the multiset of modulo $l$ Satake parameters, to raise in characteristic $0$, it suffices
to raise the level at one $v \in \Spl(I)$ to the subgroup\footnote{It's well known that for $t_1 \geq t_2 \geq \cdots \geq t_a$, 
$\st_{t_1}(\chi_{v,1}) \times \cdots\times \st_{t_a}(\chi_{v,a})$ has non trivial invariant vectors under 
$\ker (GL_{t_1+\cdots+t_a}(\OC_v) \longrightarrow 
P_{s_1,\cdots,s_b}(\kappa(v)))$ if $(s_1 \geq \cdots \geq s_b)$ is the conjugated partition associated to $(t_1 \geq \cdots \geq t_a)$ and
this subgroup is, up to conjugacy, the minimal one among groups of this form.}
$$\ker \bigl ( GL_d(\OC_v) \longrightarrow P_{d-i-1,d-i,\cdots,d}(\kappa(v)) \bigr )$$
where $P_{d-i-1,1,\dots,1}$ is the standard
parabolic subgroup with Levi $GL_{d-i-1} \times GL_1 \times \cdots \times GL_1$ of $GL_d$. In particular the more
you go away from middle degree, to raise your torsion cohomology class in characteristic zero, the more you need, a priori, to
raise the level.
\\
(c) For $v \neq w \in \Spl(I)$, the representations $\Pi(v)$ and $\Pi(w)$ are not isomorphic but are
weakly congruent in the sense of \S 3 \cite{vigneras-langlands}, i.e. they share the same modulo $l$
Satake parameters at each place you can define Satake parameters and so in particular at all places of $\Spl(I)- \{ v,w \}$. 
\\
(d) In a recent preprint \cite{boyer-mazur}, this result is the main tool for generalizing 
Mazur's principle for $G$ i.e. to do level lowering for automorphic representations of $G$. Roughly the idea goes like that:
start from a maximal ideal $\mathfrak m$ which raises in characteristic $0$ for some level $I$ ramified at a fixed place $w$.
The tricky part is to find some good hypothesis to force the existence of a non trivial torsion cohomology class with level $I'=I^w I'_w$ 
with $I_w \subsetneq I'_w$.
Then use the previous theorem at a place $v \neq w$ to raise again in characteristic zero: then you succeed to 
lower the level at the place $w$ but you might have to increase the level at $v$.

We are then lead to the
construction of such torsion cohomolgy classes. In \S \ref{para-torsion}, we investigate this question 
with the help of the notion of completed cohomology
$$\widetilde{H}^i_{I^l}(V_{\xi,\OC}):=\lim_{\atopp{\longleftarrow}{n}}
\lim_{\atopp{\longrightarrow}{I_l}} H^i(X_{I^lI_l},V_{\xi,\OC/\lambda^n}).$$
As they are independent of the choice of the weight $\xi$,
\begin{itemize}
\item by taking $\xi$ the trivial representation and using the Hochschild-Serre spectral sequence which,
starting from the completed cohomology, computes the cohomology at a finite level, we can show
that for each divisor $s$ of $d$, the free quotient of $\widetilde H^{d-s}_{I^l}$ is non trivial provided
that the level $I^l$ outside $l$ is small enough: in fact here, we just prove an imprecise
version of this fact, see proposition \ref{prop-completee-nonnul}.

\item Then taking $\xi$ regular and as the free quotient of the finite cohomology outside the middle
degree are trivial, we observe that, for each divisor $s$ of $d$, we can find torsion classes
in level $I_l$ so that by considering their reduction modulo $l^n$ for varying $n$, and through the process of
completed cohomology i.e. taking first the limit on $I_l$ and then on $l^n$, these classes
organize themselves in some torsion free classes.
\end{itemize}

Automorphic congruences can be useful to construct non trivial elements in Selmer
groups. For this one need to obtain such congruences between tempered and non tempered
automorphic representations of same weight, which is not exactly the case here. In some forthcoming xork, we should be able
to do so using 
\begin{itemize}
\item either torsion classes in the cohomology of Harris-Taylor's local system constructed in
\cite{boyer-aif} and associated to non tempered automorphic representation. The idea is then to prove 
that this torsion classes give torsion classes in the cohomology of the whole Shimura variety
and use the main theorem of this paper.

\item or the cancellation of completed cohomology groups $\widetilde{H}^i_{I^l}(V_{\xi,\OC})$ when $i\geq d$,
see \cite{scholze-torsion} proposition IV.2.2.
\end{itemize}

%


\section{Notations and background}

\subsection{Induced representations}

Consider a local fiel $K$ with its absolute value $| - |$: let $q$ denote the cardinal of its residual field.
For a representation $\pi$ of $GL_d(K)$ and $n \in \frac{1}{2} \Zm$, set 
$$\pi \{ n \}:= \pi \otimes q^{-n \val \circ \det}.$$

\begin{notas} \label{nota-ind}
For $\pi_1$ and $\pi_2$ representations of respectively $GL_{n_1}(K)$ and
$GL_{n_2}(K)$, we will denote by
$$\pi_1 \times \pi_2:=\ind_{P_{n_1,n_1+n_2}(K)}^{GL_{n_1+n_2}(K)}
\pi_1 \{ \frac{n_2}{2} \} \otimes \pi_2 \{-\frac{n_1}{2} \},$$
the normalized parabolic induced representation where for any sequence
$\underline r=(0< r_1 < r_2 < \cdots < r_k=d)$, we write $P_{\underline r}$ for 
the standard parabolic subgroup of $GL_d$ with Levi
$$GL_{r_1} \times GL_{r_2-r_1} \times \cdots \times GL_{r_k-r_{k-1}}.$$ 
\end{notas}

Remind that an irreducible representation is called supercuspidal if it's not a subquotient
of some proper parabolic induced representation.

\begin{defin} \label{defi-rep} (see \cite{zelevinski2} \S 9 and \cite{boyer-compositio} \S 1.4)
Let $g$ be a divisor of $d=sg$ and $\pi$ an irreducible cuspidal representation of $GL_g(K)$. 
The induced representation
$$\pi\{ \frac{1-s}{2} \} \times \pi \{ \frac{3-s}{2} \} \times \cdots \times \pi \{ \frac{s-1}{2} \}$$ 
holds a unique irreducible quotient (resp. subspace) denoted $\st_s(\pi)$ (resp.
$\speh_s(\pi)$); it's a generalized Steinberg (resp. Speh) representation.
\end{defin}

\rem from a galoisian point of view through the local Langlands correspondence, the
representation $\speh_s(\pi)$ matches to the direct sum $\sigma(\frac{1-s}{2}) \oplus \cdots \oplus
\sigma(\frac{s-1}{2})$ where $\sigma$ matches to $\pi$. \emph{More generally} for
$\pi$ any irreducible representation of $GL_g(K)$ associated to $\sigma$ by the local Langlands
correspondence, we will denote $\speh_s(\pi)$ the representation of
$GL_{sg}(K)$ matching, through the local Langlands correspondence,  
$\sigma(\frac{1-s}{2}) \oplus \cdots \oplus \sigma(\frac{s-1}{2})$.

\begin{defin} A smooth $\overline \Qm_l$-representation of finite length $\pi$ of $GL_d(K)$ is said
\emph{entire} if there exist a finite extension $E/ \Qm_l$ contained in
$\overline \Qm_l$, with ring of integers  $\OC_E$, and a $\OC_E$-representation $L$ of 
$GL_d(K)$, which is a free $\OC_E$-module, such that $\overline \Qm_l \otimes_{\OC_E} L \simeq \pi$
and $L$ is a $\OC_E $ $GL_n(K)$-module of finite type. Let 
$\kappa_E$ the residual field of $\OC_E$, we say that 
$$\overline \Fm_l \otimes_{\kappa_E} \kappa_E \otimes_{\OC_E} L$$ 
is the modulo $l$ reduction of $L$. 
\end{defin}

\rem the \textit{Brauer-Nesbitt principle} asserts that the semi-simplification of  
$\overline \Fm_l \otimes_{\OC_E} L$ is a finite length $\overline \Fm_l$-representation of $GL_d(K)$ 
which is independent of the choice of $L$. Its image in the Grothendieck group will be denoted
$r_l(\pi)$ and called the \textit{modulo $l$ reduction of $\pi$}.

\noindent \textit{Example}:  from \cite{vigneras-induced} V.9.2 or \cite{dat-jl} \S 2.2.3, we know that
the modulo $l$ reduction of $\speh_s(\pi)$ is irreducible.

\subsection{Geometry of KHT Shimura varieties}
\label{para-geo}

Let $F=F^+ E$ be a CM field where $E/\Qm$ is quadratic imaginary and $F^+/\Qm$
totally real with a fixed real embedding $\tau:F^+ \hookrightarrow \Rm$. For a place $v$ of $F$,
we will denote
\begin{itemize}
\item $F_v$ the completion of $F$ at $v$,

\item $\OC_v$ the ring of integers of $F_v$,

\item $\varpi_v$ a uniformizer,

\item $q_v$ the cardinal of the residual field $\kappa(v)=\OC_v/(\varpi_v)$.
\end{itemize}
Let $B$ be a division algebra with center $F$, of dimension $d^2$ such that at every place $x$ of $F$,
either $B_x$ is split or a local division algebra. 

\begin{nota} Let denote $\bad$ the set of places of $F$ such that for any $w \not \in \bad$, we have
$B_w^\times \simeq GL_d(F_w)$.
\end{nota}

Further we assume $B$ provided with an involution of
second kind $*$ such that $*_{|F}$ is the complex conjugation. For any
$\beta \in B^{*=-1}$, denote $\sharp_\beta$ the involution $x \mapsto x^{\sharp_\beta}=\beta x^*
\beta^{-1}$ and $G/\Qm$ the group of similitudes, denoted $G_\tau$ in \cite{h-t}, defined for every
$\Qm$-algebra $R$ by 
$$
G(R)  \simeq   \{ (\lambda,g) \in R^\times \times (B^{op} \otimes_\Qm R)^\times  \hbox{ such that } 
gg^{\sharp_\beta}=\lambda \}
$$
with $B^{op}=B \otimes_{F,c} F$. 
If $x$ is a place of $\Qm$ split $x=yy^c$ in $E$ then 
\addtocounter{thm}{1}
\begin{equation} \label{eq-facteur-v}
G(\Qm_x) \simeq (B_y^{op})^\times \times \Qm_x^\times \simeq \Qm_x^\times \times
\prod_{z_i} (B_{z_i}^{op})^\times,
\end{equation}
where, identifying places of $F^+$ over $x$ with places of $F$ over $y$,
$x=\prod_i z_i$ in $F^+$.

\noindent \textbf{Convention}: for $x=yy^c$ a place of $\Qm$ split in $E$ and $z$ a place of $F$ over $y$
as before, we shall make throughout the text, the following abuse of notation by denoting 
$G(F_z)$ in place of the factor $(B_z^{op})^\times$ in the formula (\ref{eq-facteur-v})
as well as
$$G(\Am^z):=G(\Am^x) \times \bigl (\Qm_x^\times \times \prod_{z_i \neq z} (B_{z_i}^{op})^\times \bigr ).$$

In \cite{h-t}, the author justify the existence of some $G$ like before such that moreover
\begin{itemize}
\item if $x$ is a place of $\Qm$ non split in $E$ then $G(\Qm_x)$ is quasi split;

\item the invariants of $G(\Rm)$ are $(1,d-1)$ for the embedding $\tau$ and $(0,d)$ for the others.
\end{itemize}

As in  \cite{h-t} bottom of page 90, a compact open subgroup $U$ of $G(\Am^\oo)$ is said 
\emph{small enough}
if there exists a place $x$ such that the projection from $U^v$ to $G(\Qm_x)$ does not contain any 
element of finite order except identity.

\begin{nota}
Denote $\IC$ the set of open compact subgroup small enough of $G(\Am^\oo)$.
For $I \in \IC$, write $X_{I,\eta} \longrightarrow \spec F$ the associated
Shimura variety of Kottwitz-Harris-Taylor type.
\end{nota}

From now on, we fix a prime number $l$ unramified in $E$.

\begin{defin} \label{defi-spl}
Define $\Spl$ the set of  places $v$ of $F$ such that $p_v:=v_{|\Qm} \neq l$ is split in $E$ and
$B_v^\times \simeq GL_d(F_v)$.  For each $I \in \IC$, write
$\Spl(I)$ the subset of $\Spl$ of places which doesn't divide the level $I$.
\end{defin}

\rem for every $v \in \Spl$, the variety $X_{I,\eta}$ has a projective model $X_{I,v}$ over $\spec \OC_v$
with special fiber $X_{I,s_v}$. For $I$ going through $\IC$, the projective system $(X_{I,v})_{I\in \IC}$ 
is naturally equipped with an action of $G(\Am^\oo) \times \Zm$ such that 
$w_v$ in the Weil group $W_v$ of $F_v$ acts by $-\deg (w_v) \in \Zm$,
where $\deg=\val \circ \art^{-1}$ and $\art^{-1}:W_v^{ab} \simeq F_v^\times$ is Artin's isomorphism
which sends geometric Frobenius to uniformizers.

\begin{notas} (see \cite{boyer-invent2} \S 1.3)
For $I \in \IC$, the Newton stratification of the geometric special fiber $X_{I,\bar s_v}$ is denoted
$$X_{I,\bar s_v}=:X^{\geq 1}_{I,\bar s_v} \supset X^{\geq 2}_{I,\bar s_v} \supset \cdots \supset 
X^{\geq d}_{I,\bar s_v}$$
where $X^{=h}_{I,\bar s_v}:=X^{\geq h}_{I,\bar s_v} - X^{\geq h+1}_{I,\bar s_v}$ is an affine 
scheme\footnote{see for example \cite{ito2}}, smooth of pure dimension $d-h$ built up by the geometric 
points whose connected part of its Barsotti-Tate group is of rank $h$.
For each $1 \leq h <d$, write
$$i_{h+1}:X^{\geq h+1}_{I,\bar s_v} \hookrightarrow X^{\geq h}_{I,\bar s_v}, \quad
j^{\geq h}: X^{=h}_{I,\bar s_v} \hookrightarrow X^{\geq h}_{I,\bar s_v}.$$
\end{notas}

\subsection{Cohomology groups over $\overline \Qm_l$}

Let us begin with some known facts about irreducible algebraic representations of $G$,
see for example \cite{h-t} p.97. Let $\sigma_0:E \hookrightarrow
\overline{\Qm}_l$ be a fixed embedding and et write $\Phi$ the set of embeddings 
$\sigma:F \hookrightarrow \overline \Qm_l$ whose restriction to $E$ equals $\sigma_0$.
There exists then an explicit bijection between irreducible algebraic representations $\xi$ of $G$ 
over $\overline \Qm_l$ and $(d+1)$-uples
$\bigl ( a_0, (\overrightarrow{a_\sigma})_{\sigma \in \Phi} \bigr )$
where $a_0 \in \Zm$ and for all $\sigma \in \Phi$, we have $\overrightarrow{a_\sigma}=
(a_{\sigma,1} \leq \cdots \leq a_{\sigma,d} )$.

For $K \subset \overline \Qm_l$ a finite extension of $\Qm_l$ such that the representation
$\iota^{-1} \circ \xi$ of highest weight
$\bigl ( a_0, (\overrightarrow{a_\sigma})_{\sigma \in \Phi} \bigr )$,
is defined over $K$, write $W_{\xi,K}$ the space of this representation and $W_{\xi,\OC}$
a stable lattice under the action of the maximal open compact subgroup $G(\Zm_l)$, 
where $\OC$ is the ring of integers of $K$ with uniformizer $\lambda$.

\rem if $\xi$ is supposed to be $l$-small, in the sense that for all $\sigma \in \Phi$ and all
$1 \leq i < j \leq n$ we have $0 \leq a_{\tau,j}-a_{\tau,i} < l$, then such a stable lattice is unique
up to a homothety.

\begin{nota} \label{nota-Vxi}
We will denote $V_{\xi,\OC/\lambda^n}$ the local system on $X_\IC$ as well as
$$V_{\xi,\OC}=\lim_{\atopp{\longleftarrow}{n}} V_{\xi,\OC/\lambda^n} \quad \hbox{and} \quad
V_{\xi,K}=V_{\xi,\OC} \otimes_{\OC} K.$$
For $\overline \Zm_l$ and $\overline \Qm_l$ version, we will write respectively
$V_{\xi,\overline \Zm_l}$ and $V_{\xi,\overline \Qm_l}$. 
\end{nota}

\rem the representation $\xi$ is said \emph{regular} if its parameter
$\bigl ( a_0, (\overrightarrow{a_\sigma})_{\sigma \in \Phi} \bigr )$ 
verify for all  $\sigma \in \Phi$ that $a_{\sigma,1} < \cdots < a_{\sigma,d}$.

\begin{defin}
An irreducible automorphic representation $\Pi$ is said $\xi$-cohomological if there exists
an integer $i$ such that
$$H^i \bigl ( ( \lie ~G(\Rm)) \otimes_\Rm \Cm,U,\Pi_\oo \otimes \xi^\vee \bigr ) \neq (0),$$
where $U$ is a maximal open compact subgroup modulo the center of $G(\Rm)$.
\end{defin}

For $\Pi$ an automorphic irreducible representation $\xi$-cohomological of $G(\Am)$,
then, see for example lemma 3.2 of \cite{boyer-aif}, for each $v \in \Spl$,
the local component $\Pi_v$ is isomorphic to some $\speh_s(\pi_v)$ where $\pi_v$
is an irreducible non degenerate representation and $s \geq 1$ an integer which is independent of
the place $v \in \Spl$.

\begin{defin} \label{defi-parametre}
The integer $s$ mentioned above is called the degeneracy depth of $\Pi$.
\end{defin}

From now on, we fix $v \in \Spl$.

%

\begin{nota} \label{nota-hixi}
For $1 \leq h \leq d$, let us denote $\IC_v$ the set of open compact subgroups of the following shape
$$U_v(\underline m):= U_v(\underline m^v) \times K_v(m_1),$$
where $K_v(m_1)=\ker \bigl ( GL_{d}(\OC_v) \longrightarrow GL_{d}(\OC_v/ (\varpi_v^{m_1})) \bigr )$.
The notation $[H^i(h,\xi)]$ (resp. $[H^i_!(h,\xi)]$) means the image of 
$$\lim_{\atopp{\longrightarrow}{I \in \IC_v}} H^i(X_{I,\bar s_v}^{\geq h}, V_{\xi,\overline \Qm_l}) 
\qquad \hbox{resp. }
\lim_{\atopp{\longrightarrow}{I \in \IC_v}} H^i_c(X_{I,\bar s_v}^{=h},V_{\xi,\overline \Qm_l})$$ 
in the Grothendieck group $\groth(v)$ of admissible representations of
$G(\Am^{\oo,v}) \times GL_{d}(F_v) \times \Zm$.
\end{nota}

\rem recall that the action of $\sigma \in W_v$ on these $GL_{d}(F_v) \times \Zm$-modules 
is given by those of $-\deg \sigma \in \Zm$.

\begin{nota}
For $\Pi^{\oo,v}$ an irreducible representation of $G(\Am^{\oo,v})$, let denote $\groth(h)\{ \Pi^{\oo,v} \}$ 
the subgroup of $\groth(v)$ generated by representations of the shape 
$\Pi^{\oo,v} \otimes \Psi_{v} \otimes \zeta$ where $\Psi_{v}$ (resp. $\zeta$) is any irreducible
representation of  $GL_{d}(F_v)$ (resp. of $\Zm$). We will denote then
$$[H^i(h,\xi)]\{ \Pi^{\oo,v} \}$$
the projection of $[H^i(h,\xi)]$ on this direct factor.
\end{nota}

We write
$$[H^i(h,\xi)]\{ \Pi^{\oo,v} \}=\Pi^{\oo,v} \otimes \Bigl ( \sum_{\Psi_v,\xi} m_{\Psi_v,\zeta}(\Pi^{\oo,v}) \Psi_v 
\otimes \zeta \Bigr ),$$ 
where $\Psi_v$ (resp. $\xi$) goes through irreducible admissible representations of $GL_d(F_v)$,
(resp. of $\Zm$ which can be considered as an unramified representation of $W_v$).

\noindent \textit{Remark}: recall, cf. \cite{boyer-compositio} where all these cohomology groups
are explicitly computed, that if for some $h$ and $i$,
$[H^i(h,\xi)]\{ \Pi^{\oo,v} \} \neq (0)$ then $\Pi^{\oo,v}$ is the component of a automorphic
$\xi$-cohomological representation.

\begin{propo} \label{prop-temperee}
Let $\Pi$ be an automorphic irreducible tempered representation $\xi$-cohomological.
\begin{itemize}
\item[(i)] For all $h=1,\cdots,d$ and all $i \neq d-h$,
$$[H^i(h,\xi)]\{ \Pi^{\oo,v} \} \qquad \hbox{and} \qquad [H^i_!(h,\xi)]\{ \Pi^{\oo,v} \}$$
are trivial.

\item[(ii)] If $[H^{d-h}(h,\xi)]\{ \Pi^{\oo,v} \}$ (resp. $[H^{d-h}_!(h,\xi)]\{ \Pi^{\oo,v} \}$) has non trivial
invariants under the action of $GL_d(\OC_v)$ then the local component $\Pi_v$ of $\Pi$ at $v$ 
is isomorphic to a representation of the following shape
$$\st_r(\chi_{v,0}) \times \chi_{v,1} \times \cdots \times \chi_{v,r}$$
where $\chi_{v,0},\cdots,\chi_{v,t}$ are unramified characters and $r =h$ (resp. $r \geq h$).
\end{itemize}
\end{propo}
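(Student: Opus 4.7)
The plan is to deduce both assertions from the explicit computation of the cohomology of the Newton strata $X^{=h}_{I,\bar s_v}$ and $X^{\geq h}_{I,\bar s_v}$ worked out in \cite{boyer-compositio} (via Harris--Taylor's description through Igusa varieties and Mantovan's formula), specialized to the case where $\Pi$ is tempered. Recall that a tempered irreducible $\xi$-cohomological automorphic representation $\Pi$ of $G$ has degeneracy depth $s=1$ in the sense of definition \ref{defi-parametre}, so its local component $\Pi_v$ at $v \in \Spl$ is a generic tempered representation, isomorphic to a parabolic induction of Steinberg representations $\st_{a_1}(\pi_{v,1}) \times \cdots \times \st_{a_r}(\pi_{v,r})$ attached to unitary supercuspidal representations $\pi_{v,j}$ of smaller linear groups. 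This is the input that will make the formulas of \cite{boyer-compositio} collapse to the statement we want.

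For part (i), I would combine two ingredients. On the one hand, $X^{=h}_{I,\bar s_v}$ is affine and smooth of pure dimension $d-h$, so by Artin vanishing $H^i_c(X^{=h}_{I,\bar s_v},V_{\xi,\overline\Qm_l})=0$ for $i<d-h$, which already gives half of the range for $[H^i_!(h,\xi)]$. On the other hand, for $i>d-h$, the vanishing of the tempered contribution follows from the Langlands--Kottwitz description of the cohomology of Newton strata: via Mantovan's formula it is expressed in terms of cohomology of Igusa varieties, and the tempered part concentrates in middle degree by the standard archimedean Vogan--Zuckerman argument for $\xi$-cohomological tempered representations. This settles (i) for $[H^i_!(h,\xi)]$. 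The analogous statement for $[H^i(h,\xi)]$ then follows by descending induction on $h$, starting from $h=d$ (where $X^{\geq d}=X^{=d}$ is zero-dimensional) and using the long exact sequence associated to the stratification
$$X^{\geq h}_{I,\bar s_v} = X^{=h}_{I,\bar s_v} \sqcup X^{\geq h+1}_{I,\bar s_v}$$
with $i_{h+1}$ closed and $j^{\geq h}$ the open complement.

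For (ii), one exploits the explicit form of $[H^{d-h}_!(h,\xi)]\{\Pi^{\oo,v}\}$ in Boyer's computation. As a $GL_d(F_v)$-module it is, up to an unramified twist, a parabolic induction of the shape $\st_h(\chi_{v,0}) \times \chi_{v,1} \times \cdots \times \chi_{v,d-h}$, in which the Steinberg factor of size exactly $h$ is produced by the combinatorics of the stratum and the remaining factors come from the complementary components of $\Pi_v$. Requiring nonzero $GL_d(\OC_v)$-invariants forces all the characters appearing to be unramified, whence $\Pi_v$ has the desired form with $r=h$. For the closed stratum version $[H^{d-h}(h,\xi)]\{\Pi^{\oo,v}\}$, the stratification spectral sequence brings in contributions from all deeper strata $X^{\geq h'}$ with $h' \geq h$, each contributing a representation whose Steinberg factor has size $h' \geq h$; hence the weaker conclusion $r \geq h$.

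The main obstacle is the middle-degree concentration used in part (i): once one trusts the explicit formulas of \cite{boyer-compositio}, (ii) is obtained by direct inspection and (i) by the bookkeeping of an Artin-vanishing plus long exact sequence argument. But the proof of those formulas rests on the Mantovan-type decomposition together with the vanishing of tempered contributions outside middle degree on Igusa varieties, which is the delicate archimedean input that ultimately drives the whole statement.
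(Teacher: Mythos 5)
Your overall strategy --- read both statements off the explicit computations of \cite{boyer-compositio} --- is the same as the paper's, which proves (i) by quoting proposition 1.3.9 of \cite{boyer-imj} and (ii) by quoting proposition 3.6 of \cite{boyer-aif} (i.e. proposition 3.6.1 of \cite{boyer-compositio}) together with proposition 3.12 of \cite{boyer-aif} or corollary 5.4.1 of \cite{boyer-invent2}. But in the two places where you actually fill in details, the argument goes wrong. The serious error is in (ii): you have swapped the two cases. The proposition asserts the exact equality $r=h$ when $[H^{d-h}(h,\xi)]\{\Pi^{\oo,v}\}$ (ordinary cohomology of the \emph{closed} stratum $X^{\geq h}$) has $GL_d(\OC_v)$-invariants, and only $r\geq h$ for $[H^{d-h}_!(h,\xi)]$ (compact supports on the \emph{open} stratum); you prove the reverse. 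Moreover your claimed shape of $[H^{d-h}_!(h,\xi)]\{\Pi^{\oo,v}\}$, namely $\st_h(\chi_{v,0})\times\chi_{v,1}\times\cdots$, is not what the computations give: for tempered $\Pi$ with $\Pi_v\simeq \st_r(\chi_v)\times\pi'_v$, $\pi'_v$ unramified and \emph{any} $r\geq h$, the $\Pi^{\oo,v}$-isotypic part of $H^{d-h}_c(X^{=h}_{I,\bar s_v},V_{\xi,\overline\Qm_l})$ is of the form $\bigl(\speh_h(\chi_v\{ \frac{h-r}{2} \})\times\st_{r-h}(\chi_v\{ \frac{h}{2} \})\bigr)\times\pi'_v$, while the middle-degree cohomology of the closed stratum retains only the $r=h$ part (this is exactly what is recalled in the proof of the first lemma of \S \ref{para-principal}). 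In other words it is the extension by zero $j^{\geq h}_!$ whose class decomposes along the deeper strata $h'\geq h$ (corollary 5.4.1 of \cite{boyer-invent2}), not the cohomology of the closed stratum; your ``deeper strata'' heuristic is attached to the wrong object, and as written you only obtain $r\geq h$ in the case where the statement requires $r=h$, so that half of (ii) is not proved.

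For (i), the Artin-vanishing half is fine, but the descending induction through the long exact sequence $\cdots\to H^i_c(X^{=h})\to H^i(X^{\geq h})\to H^i(X^{\geq h+1})\to H^{i+1}_c(X^{=h})\to\cdots$ does not close by bookkeeping alone: knowing that the tempered part of $H^\bullet_c(X^{=h})$ sits in degree $d-h$ and that of $H^\bullet(X^{\geq h+1})$ in degree $d-h-1$ still allows a tempered contribution to $H^{d-h-1}(X^{\geq h})$, namely the kernel of the map $H^{d-h-1}(X^{\geq h+1})\to H^{d-h}_c(X^{=h})$ on $\Pi^{\oo,v}$-isotypic parts, whose source is genuinely nonzero for tempered $\Pi$ (compare the sequence (\ref{eq-sec0})). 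Injectivity of that map on these isotypic components is a non-formal input; it is precisely what proposition 1.3.9 of \cite{boyer-imj} supplies, and it does not follow from an archimedean Vogan--Zuckerman argument --- the middle-degree concentration of the tempered part for Newton strata/Igusa varieties is itself one of the hard outputs of \cite{boyer-compositio}. So you should either quote the relevant statements of \cite{boyer-imj}, \cite{boyer-aif}, \cite{boyer-invent2} as the paper does, or, if you reconstruct them, get the open/closed dichotomy in (ii) the right way around and justify the injectivity step in (i).
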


\begin{proof}
(i) This is exactly proposition 1.3.9 of \cite{boyer-imj}.

(ii) The result for $H^i(h,\xi)$ is a particular case of proposition 3.6 of \cite{boyer-aif} (which proposition
follows directly from proposition 3.6.1 of \cite{boyer-compositio}) for the constant local system, i.e.
when $\pi_v$ is the trivial representation and $s=1$.

Concerning the cohomology with compact supports, we can use either proposition 3.12 of \cite{boyer-aif}
or the description, given by corollary 5.4.1 of \cite{boyer-invent2}, of this extension by zero
in terms of local systems on Newton strata with indices $h' \geq h$.
\end{proof}

\begin{propo} \label{prop-non-temperee} (see \cite{boyer-compositio} theorem 4.3.1)
Let $\Pi$ be an automorphic irreducible representation $\xi$-cohomological with depth of degeneracy
$s>1$. Then for $\xi$ the trivial character, 
$[H^{d-2h+s}_!(h,\xi)]\{ \Pi^{\oo,v} \}$ is non trivial.
\end{propo}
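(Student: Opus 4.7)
The plan is to reduce the proposition to the explicit computation of the cohomology of the open Newton strata $X^{=h}_{I,\bar s_v}$ carried out in Theorem~4.3.1 of \cite{boyer-compositio}. The starting point, recalled just before Definition~\ref{defi-parametre}, is that for an automorphic representation $\Pi$ which is $\xi$-cohomological with degeneracy depth $s>1$, the local component at any $v \in \Spl$ is forced to have the shape $\Pi_v \simeq \speh_s(\pi_v)$ for some irreducible non-degenerate representation $\pi_v$ of a smaller $GL_g(F_v)$. Via the local Langlands correspondence and the remark following Definition~\ref{defi-rep}, this rigidly fixes the Weil--Deligne parameter of $\Pi_v$ as a sum of $s$ consecutive half-integral Tate twists of the parameter of $\pi_v$; this pins down the Frobenius weights attached to $\Pi^{\oo,v}$-isotypic classes and is where the extra shift by $s$ relative to the tempered case of Proposition~\ref{prop-temperee} ultimately comes from.

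Next, I would analyze the compactly supported cohomology of $X^{=h}_{I,\bar s_v}$ using the filtration of $j^{=h}_! V_{\xi,\overline{\Qm}_l}$ whose graded pieces are the Harris--Taylor local systems built out of $\pi_v$, as constructed in \cite{boyer-invent2}. The cohomology of each graded piece is then computed through the Igusa tower and a Hochschild--Serre spectral sequence; after projecting onto the direct factor $\groth(h)\{\Pi^{\oo,v}\}$, only the graded piece whose parameter matches the Speh index $s$ can contribute non-trivially. That piece produces a class sitting in the degree obtained from the middle degree $d-h$ of the stratum, shifted by the contribution $s-h$ coming from the Speh twist on the Galois side, yielding exactly $i = d - 2h + s$.

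The final step is to verify that this extremal class is not annihilated by a differential in either spectral sequence. This follows because the other graded pieces contribute in strictly different degrees and with distinct actions of $\Zm$ through $-\deg$, so no cancellation across pieces is possible; and the Igusa-variety spectral sequence for the relevant piece degenerates at the $E_2$ level on the $\Pi^{\oo,v}$-isotypic factor for exactly the same reason (distinct Weil weights on distinct rows). The hard part of this plan is the combinatorial bookkeeping connecting the filtration index $t$ of the Harris--Taylor local systems to the Speh shifts $\frac{1-s}{2}, \frac{3-s}{2}, \ldots, \frac{s-1}{2}$, and then to the precise cohomological degree on each stratum; once this dictionary is set up correctly, the survival of a non-zero class in degree $d-2h+s$ becomes formal.
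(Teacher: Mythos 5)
Your reduction to Theorem 4.3.1 of \cite{boyer-compositio} is exactly what the paper does: Proposition \ref{prop-non-temperee} is stated with no independent argument, being a direct quotation of that theorem (specialized to trivial coefficients), so your approach is essentially the same. Your further sketch of how that cited theorem is itself established (Harris--Taylor local systems on the Newton strata, Igusa-type spectral sequences, separation of contributions by Frobenius weights) is broadly consistent with Boyer's published method but is not part of, nor needed for, the argument in this paper.
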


\rem if $\xi$ is a regular parameter then the depth of degeneracy of any irreducible automorphic
representation $\xi$-cohomological is necessary equal to $1$. In particular
theorem 4.3.1 of \cite{boyer-compositio} is compatible with the classical result saying that
for a regular $\xi$, the cohomology of the Shimura variety $X_I$ with coefficients
in $V_{\xi,\overline \Qm_l}$, is concentrated in middle degree.

\subsection{Hecke algebras}

Consider the following set $\unr(I)$ which is the union of
\begin{itemize}
\item places $q \neq l$ of $\Qm$ inert in $E$ not below a place of $\bad$ and where $I_q$ is maximal;

\item places $w \in \Spl(I)$.
\end{itemize}

\begin{nota} \label{nota-spl2}
For $I \in \IC$ a finite level, write
$$\Tm_I:=\prod_{x \in \unr(I)} \Tm_{x}$$
where for $x$ a place of $\Qm$ (resp. $x \in \Spl(I)$), 
$\Tm_{x}$ is the unramified Hecke algebra of $G(\Qm_x)$ (resp. of $GL_d(F_x)$) over
$\overline \Zm_l$.
\end{nota}

\noindent \textit{Example}:  for $w \in \Spl(I)$, we have
$$\Tm_{w}=\overline \Zm_l \bigl [T_{w,i}:~ i=1,\cdots,d \bigr ],$$
where $T_{w,i}$ is the characteristic function of
$$GL_d(\OC_w) \diag(\overbrace{\varpi_w,\cdots,\varpi_w}^{i}, \overbrace{1,\cdots,1}^{d-i} ) 
GL_d(\OC_w) \subset  GL_d(F_w).$$
More generally, the Satake isomorphism identifies $\Tm_x$ with $\overline \Zm_l[X^{un}(T_x)]^{W_x}$
where
\begin{itemize}
\item $T_x$ is a split torus,

\item $W_x$ is the spherical Weyl group

\item and $X^{un}(T_x)$ is the set of $\overline \Zm_l$-unramified characters of $T_x$.
\end{itemize}

Consider a fixed maximal ideal $\mathfrak m$ of $\Tm_I$ and for every $x \in \unr(I)$ let denote
$S_{\mathfrak m}(x)$ be the multi-set\footnote{A multi-set is a set with multiplicities.} 
of modulo $l$ Satake parameters at $x$ associated to $\mathfrak m$.

\noindent \textit{Example}: 
for every $w \in \Spl(I)$, the multi-set of Satake parameters at $w$ corresponds to the roots of
the Hecke polynomial
$$P_{\mathfrak{m},w}(X):=\sum_{i=0}^d(-1)^i q_w^{\frac{i(i-1)}{2}} \overline{T_{w,i}} X^{d-i} \in \overline 
\Fm_l[X]$$
i.e.
$$S_{\mathfrak{m}}(w) := \bigl \{ \lambda \in \Tm_I/\mathfrak m \simeq \overline \Fm_l \hbox{ such that }
P_{\mathfrak{m},w}(\lambda)=0 \bigr \}.$$

To each Hecke polynomial $P_{\mathfrak m,x}(X)$ at $x \in \unr(I)$, one can associate its reciprocal $P^\vee_{\mathfrak{m},w}(X)$
polynomial whose roots are inverse of those of $P_{\mathfrak m,x}(X)$. We then define
$\mathfrak m^\vee$ to be the maximal ideal of $\Tm_I$ so that the roots of $P^\vee_{\mathfrak{m},w}(X)$ are those
of $S_{\mathfrak m^\vee}(x)$ for every $x \in \unr(I)$.

\noindent \textit{Example}: for $x=w \in \Spl(I)$,
with the previous notations, the image $\overline{T_{w,i}}$ of $T_{w,i}$ inside  $\Tm_I/\mathfrak m$
can be written
$$\overline{T_{w,i}}=q_w^{\frac{i(1-i)}{2}} \sigma_i (\lambda_1,\cdots, \lambda_d)$$
where we write $S_{\mathfrak m}(w)=\{ \lambda_1,\cdots,\lambda_d \}$ and where the $\sigma_i$ 
are the elementary symmetric functions. Locally at $w$ the maximal ideal $\mathfrak m^\vee$
is defined by
$$T_{w,i} \in \Tm_w \mapsto q_w^{\frac{i(1-i)}{2}} \sigma_i (\lambda_1^{-1},\cdots,\lambda_d^{-1})
\in \overline \Fm_l.$$

\section{Automorphic congruences}
\label{para-principal}
\renewcommand{\thethm}{\arabic{section}.\arabic{thm}}
\renewcommand{\theequation}{\arabic{section}.\arabic{thm}}

Consider from now on a fixed place $v \in \Spl(I)$.

\begin{defin}
A $\Tm_{I}$-module $M$ is said to verify property \textbf{(P)}, if it has a finite filtration
$$(0)=\Fil^0(M) \subset \Fil^1(M) \cdots \subset \Fil^r(M)=M$$
such that for every $k=1,\cdots,r$, there exists
\begin{itemize}
\item an automorphic irreducible entire representation $\Pi_k$ of $G(\Am)$, which appears in the
cohomology of $(X_{I,\overline \eta_v})_{I \in \IC}$ with coefficients in $V_{\xi,\overline \Qm_l}$
and such that its local component $\Pi_{k,v}$ is ramified, i.e. $(\Pi_{k,v})^{GL_d(\OC_v)}=(0)$;

\item an unramified entire irreducible representation $\widetilde \Pi_{k,v}$ of $GL_d(F_v)$ with
the same cuspidal support as $\Pi_{k,v}$ and

\item a stable $\Tm_{I}$-lattice $\Gamma$ of 
$(\Pi^{\oo,v}_k)^{I^v} \otimes \widetilde \Pi_{k,v}^{GL_d(\OC_v)}$ such that
\begin{itemize}
\item either $\gr^k(M)$ is free, isomorphic to $\Gamma$,

\item or $\gr^k(M)$ is torsion and equals to some subquotient of $\Gamma/\Gamma'$ where 
$\Gamma' \subset \Gamma$ is another stable $\Tm_I$-lattice.
\end{itemize}
\end{itemize}
We will say that $\gr^k(M)$ is of type $i$ if moreover $\Pi_{k,v}$ looks like
$$\st_i(\chi) \times \chi_1 \times \cdots \times \chi_{d-i}$$
where $\chi,\chi_1,\cdots,\chi_{d-i}$ are unramified characters. When all the $\gr^\bullet(M)$
are of type $i$, then we will say that $M$ is of type $i$.
\end{defin}

\rem property \textbf{(P)} is by definition stable through extensions and subquotients: replacing
condition $\xi$-cohomological by $\xi^\vee$-cohomological, it is also stable by duality.

\begin{lemma} Consider $h \geq 1$ and $M$ an irreducible subquotient of
$$H^{d-h}_c(X^{=h}_{I,\bar s_v},V_{\xi,\overline \Qm_l}) \quad
\hbox{resp. of } H^{d-h}(X^{\geq h}_{I,\bar s_v},V_{\xi,\overline \Qm_l}),$$ 
then
\begin{itemize}
\item either $M$ verify property \textbf{(P)} and then is of type $h$ or $h+1$ (resp. of type $h$),

\item or $M$ is not a subquotient of 
$H^{d-h-1}(X^{\geq h+1}_{I,\bar s_v},V_{\xi,\overline \Qm_l})$.
\end{itemize}
\end{lemma}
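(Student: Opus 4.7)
The plan is to exploit the open--closed decomposition $X^{\geq h}_{I,\bar s_v} = X^{=h}_{I,\bar s_v} \sqcup X^{\geq h+1}_{I,\bar s_v}$ together with the classification of admissible local components at $v$ supplied by Propositions~\ref{prop-temperee} and \ref{prop-non-temperee}. From the distinguished triangle $j^{\geq h}_!\,(j^{\geq h})^* V_{\xi,\overline\Qm_l} \to V_{\xi,\overline\Qm_l} \to (i_{h+1})_*(i_{h+1})^* V_{\xi,\overline\Qm_l}$ I would extract the long exact sequence
$$\cdots \to H^{d-h-1}(X^{\geq h+1}_{I,\bar s_v}) \xrightarrow{\partial} H^{d-h}_c(X^{=h}_{I,\bar s_v}) \to H^{d-h}(X^{\geq h}_{I,\bar s_v}) \to H^{d-h}(X^{\geq h+1}_{I,\bar s_v}) \to \cdots$$
(coefficients in $V_{\xi,\overline\Qm_l}$ throughout). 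An irreducible subquotient $M$ of $H^{d-h}_c(X^{=h})$ is then a subquotient either of the image of $\partial$ -- so of $H^{d-h-1}(X^{\geq h+1})$ -- or of the submodule of $H^{d-h}(X^{\geq h})$ lying in the image of $H^{d-h}_c(X^{=h})$; an analogous dichotomy holds for $M$ a subquotient of $H^{d-h}(X^{\geq h})$.

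By the remark preceding Proposition~\ref{prop-temperee}, such an $M$ is $\Pi^{\oo,v}$ for some $\xi$-cohomological automorphic representation $\Pi$ of $G(\Am)$, and its local component $\Pi_v$ is $\speh_s(\pi_v)$ for an irreducible non-degenerate $\pi_v$ of some $GL_g(F_v)$ and some degeneracy depth $s \geq 1$. I plan to split the argument according to whether $\Pi$ is tempered ($s=1$) or not ($s\geq 2$). In the tempered case, Proposition~\ref{prop-temperee}(i) ensures vanishing in every degree except $d-h$, while Proposition~\ref{prop-temperee}(ii), applied to the unique unramified irreducible constituent $\widetilde\Pi_v$ of the parabolic induction from the cuspidal support of $\Pi_v$, forces its Steinberg structure $\st_r(\chi_{v,0})\times\chi_{v,1}\times\cdots\times\chi_{v,d-r}$ with $r = h$ for $H^{d-h}(X^{\geq h})$ and $r \geq h$ for $H^{d-h}_c(X^{=h})$; since $\widetilde\Pi_v$ and $\Pi_v$ share the same cuspidal support, this controls the possible type of $\Pi_v$. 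In the non-tempered case, Proposition~\ref{prop-non-temperee} demands $d-2h+s \in \{d-h-1,d-h\}$, i.e.\ $s \in \{h, h+1\}$, and Zelevinsky's classification of the subquotients of an induction from a fixed cuspidal support again pins down the Steinberg index.

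The extra hypothesis that $M$ be a subquotient of $H^{d-h-1}(X^{\geq h+1}_{I,\bar s_v})$ then feeds into the analysis as follows. For $M$ coming from $H^{d-h}_c(X^{=h})$, being in $\mathrm{im}(\partial)$ is compatible both with Steinberg index $h$ (arising from the ``interior'' of the stratum) and with index $h+1$ (the shift built into the connecting morphism from the boundary stratum), giving type $h$ or $h+1$. For $M$ coming from $H^{d-h}(X^{\geq h})$, the simultaneous appearance in the two middle-degree cohomologies of adjacent strata is rigid enough that, via the explicit Hecke-module description of the strata cohomology worked out in \cite{boyer-compositio} and \cite{boyer-invent2}, only the index $r=h$ survives. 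Finally, the stable $\Tm_I$-lattice $\Gamma$ required by property \textbf{(P)} will be produced by replacing $V_{\xi,\overline\Qm_l}$ with $V_{\xi,\overline\Zm_l}$, taking the corresponding integral cohomology, and localizing at the Hecke isotypic component of $\Pi$; the torsion/free decomposition of this integral module yields the filtration $\Fil^\bullet(M)$ and the lattice $\Gamma \subset (\Pi^{\oo,v})^{I^v} \otimes \widetilde\Pi_v^{GL_d(\OC_v)}$ demanded by the definition.

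The main technical obstacle is the non-tempered branch. Concretely one must check that the interaction between the degeneracy depth $s$, the cuspidal support of $\pi_v$, and the stratum index $h$ really produces a local component $\Pi_{k,v}$ of the form $\st_i(\chi)\times\chi_1\times\cdots\times\chi_{d-i}$ with $i \in \{h,h+1\}$ in case A and $i=h$ in case B, so that the unramified twin $\widetilde\Pi_v$ is well-defined and property \textbf{(P)} is indeed satisfied. This is where the Mantovan-type combinatorics of \cite{boyer-compositio}, together with the degree shift $s$ in Proposition~\ref{prop-non-temperee}, conspire to make the type determination sharp.
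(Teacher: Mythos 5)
Your skeleton (attach to $M$ an automorphic $\Pi$ via its isotypic component and discuss the local factor $\Pi_v$) is the right starting point, but the two statements that actually carry the lemma are not established by the tools you invoke. First, for the open stratum the bound ``type $h$ or $h+1$'' cannot come from Proposition \ref{prop-temperee}(ii), which only yields $r\geq h$, nor from the long exact sequence of the open--closed decomposition: saying that membership in $\mathrm{im}(\partial)$ is ``compatible'' with indices $h$ and $h+1$ does not exclude $r\geq h+2$. The exclusion comes from the explicit infinite-level computation of \cite{boyer-compositio} (see \S 3.2--3.3 of \cite{boyer-aif}), used directly in the paper: when $\Pi_v\simeq \st_r(\chi_v)\times\pi'_v$, the $\Pi^{\oo,v}$-isotypic part of $H^{d-h}_c(X^{=h}_{I,\bar s_v},V_{\xi,\overline \Qm_l})$ is, as a $GL_d(F_v)$-module, of the form $\bigl ( \speh_h(\chi_v\{\frac{h-r}{2}\})\times \st_{r-h}(\chi_v\{\frac{h}{2}\}) \bigr )\times\pi'_v$, which has no $GL_d(\OC_v)$-invariants as soon as $r-h\geq 2$; since the level $I$ is maximal at $v$, only $r=h,h+1$ survive at finite level (and only $r=h$ for $H^{d-h}(X^{\geq h}_{I,\bar s_v},V_{\xi,\overline\Qm_l})$). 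Without this step your tempered branch does not give the asserted types.

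Second, your non-tempered branch heads in the wrong direction. For $\Pi_v\simeq\speh_r(\chi_v)\times\pi'_v$ the surviving contribution is $\speh_h(\chi_v)\times\pi'_v$, which is unramified at $v$, so it can never furnish the ramified component $\Pi_{k,v}$ required by property \textbf{(P)}; the conclusion one must reach for these constituents is the second branch of the dichotomy, namely that they do not occur as subquotients of $H^{d-h-1}(X^{\geq h+1}_{I,\bar s_v},V_{\xi,\overline\Qm_l})$ --- again a statement read off from the explicit description of the cohomology of the closed strata in \cite{boyer-compositio}, and nowhere proved in your argument. Proposition \ref{prop-non-temperee} cannot substitute for it: it is stated only for trivial $\xi$ and only asserts non-vanishing of $[H^{d-2h+s}_!(h,\xi)]\{\Pi^{\oo,v}\}$ in one degree, so it neither ``demands'' $s\in\{h,h+1\}$ nor controls what happens on $X^{\geq h+1}_{I,\bar s_v}$. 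Finally, a third family (case (iii): $\Pi_v$ neither of Steinberg nor Speh shape times an unramified piece) must be disposed of by noting it has no $GL_d(\OC_v)$-invariants; your proposal does not treat it. In short, the lemma is a corollary of the complete isotypic computation of the strata cohomology, and the weaker statements \ref{prop-temperee}, \ref{prop-non-temperee} plus the long exact sequence do not suffice to recover it.
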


\begin{proof}
The result follows from explicit computations of these $\overline \Qm_l$- cohomology groups with
infinite level given in \cite{boyer-compositio}: the reader can see a presentation of them at 
\S 3.3 (resp. \S 3.2) of \cite{boyer-aif}. Precisely for $\Pi^\oo$ an irreducible representation 
of $G(\Am^\oo)$, the isotypic component 
\begin{multline*}
\lim_{\atopp{\longrightarrow}{I \in \IC}}
H^{d-h}_c(X^{=h}_{\IC,\bar s_v},V_\xi \otimes_{\overline \Zm_l} \overline \Qm_l) \{ \Pi^{\oo,v} \}, \\
\hbox{resp. } \lim_{\atopp{\longrightarrow}{I \in \IC}}
H^{d-h}_c(X^{=h}_{\IC,\bar s_v},V_\xi \otimes_{\overline \Zm_l} \overline \Qm_l) \{ \Pi^{\oo,v} \}
\end{multline*}
is zero if $\Pi^\oo$ is not the component outside $\oo$ of an automorphic $\xi$-cohomological
representation $\Pi$. Otherwise, we distinguish three cases according to the local component
$\Pi_v$ of $\Pi$ at $v$:
\begin{itemize}
\item[(i)] $\Pi_v \simeq \st_r(\chi_v) \times \pi'_v$ with $h \leq r \leq d$,

\item[(ii)] $\Pi_v \simeq \speh_r(\chi_v) \times \pi'_v$ with $h \leq r \leq d$,

\item[(iii)] $\Pi_v$ is not of the two previous shapes,
\end{itemize}
where $\chi_v$ is an unramified character of $F_v^\times$ and $\pi'_v$ is an irreducible admissible
unramified representation of $GL_{d-h}(F_v)$. Then this isotypic component, as a
$GL_d(F_v) \times \Zm$-representation is of the following shape:
\begin{itemize}
\item in case (i) we obtain $\bigl ( \speh_h(\chi \{ \frac{h-r}{2} \} ) \times \st_{r-h}(\chi \{ \frac{h}{2} \} ) \bigr ) 
\times \pi'_v \otimes \Xi^{\frac{r-h}{2}}$ (resp. zero if $r \neq h$ and otherwise $\st_h(\chi_v)$);

\item zero in the case (ii) if $r \neq h$ and otherwise $\speh_h(\chi_v) \times \pi'_v$.

\item Finally in case (iii), the obtained $GL_d(F_v)$-representation won't have non trivial invariants
under $GL_d(\OC_v)$.
\end{itemize}
Thus taking invariants under $I$ and because $v$ doesn't divide $I$,
\begin{itemize}
\item case (i): we obtain a  $\Tm_I$-module verifying property \textbf{(P)} which is of type $h$ ou $h+1$ 
according $r=h$ or $h+1$. 

\item case (ii): the obtained $\Tm_I$-module is then not a subquotient of the cohomology group
$H^{d-h-1}(X^{\geq h+1}_{I,\bar s_v},V_{\xi,\Qm_l})$,

\item and case (iii): as it doesn't have non trivial invariants under $GL_d(\OC_v)$, we obtain nothing
else than zero.
\end{itemize}
\end{proof}

From now on we assume that there exists $i$ such that the torsion submodule of 
$H^{d-1+i}(X_{I,\bar \eta_v},V_\xi)$ is non trivial and we fix a maximal ideal $\mathfrak m$ of $\Tm_I$ 
such that the torsion of $H^{d-1+i}(X_{I,\bar \eta_v},V_\xi)_{\mathfrak m}$ is non trivial.
Let $1 \leq h \leq d$ be maximal such that there exists $i$ for which the torsion subspace 
$H^{d-h+i}(X^{\geq h}_{I,\bar s_v},V_\xi)_{\mathfrak m,tor}$ of 
$H^{d-h+i}(X^{\geq h}_{I,\bar s_v},V_\xi)_{\mathfrak m}$ 
is non reduced to zero. Notice that
\begin{itemize}
\item since the dimension of $X^{\geq d}_{I,\bar s_v}$ equals zero then we have $h<d$;

\item by the smooth base change theorem $H^\bullet(X_{I,\bar \eta_v},V_\xi) \simeq 
H^\bullet(X^{\geq 1}_{I,\bar s_v}, V_\xi)$ so that $h \geq 1$.
\end{itemize}

\begin{lemma} With the previous notations and assuming the existence of non trivial torsion cohomology classes
in $H^\bullet(X^{\geq h}_{I,\bar s_v},V_\xi)_{\mathfrak m}$, then 
$0$ is the smallest indice $i$ such that
$H^{d-h+i}(X^{\geq h}_{I,\bar s_v},V_\xi)_{\mathfrak m,tor} \neq (0)$. Moreover every irreducible non trivial
submodule of $H^{d-h}(X^{\geq h}_{I,\bar s_v},V_\xi)_{\mathfrak m,tor}$ verifies property \textbf{(P)} 
being of type $h+1$.
\end{lemma}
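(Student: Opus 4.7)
The plan is to combine the excision long exact sequence associated to the stratification $X^{=h}_{I,\bar s_v} \hookrightarrow X^{\geq h}_{I,\bar s_v} \hookleftarrow X^{\geq h+1}_{I,\bar s_v}$ with the previous lemma and the maximality of $h$.

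For the vanishing part, the maximality of $h$ ensures that $H^\bullet(X^{\geq h+1}_{I,\bar s_v}, V_\xi)_{\mathfrak m}$ is torsion-free in every degree. The excision long exact sequence
$$\cdots \to H^{n-1}(X^{\geq h+1})_{\mathfrak m} \to H^n_c(X^{=h})_{\mathfrak m} \to H^n(X^{\geq h})_{\mathfrak m} \to H^n(X^{\geq h+1})_{\mathfrak m} \to \cdots$$
combined with Artin's affine vanishing for the smooth affine scheme $X^{=h}_{I,\bar s_v}$ of pure dimension $d-h$ (applied level by level modulo $\lambda^k$ and passed to the inverse limit) gives $H^n_c(X^{=h}, V_\xi) = 0$ for $n < d-h$. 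In these degrees $H^n(X^{\geq h})_{\mathfrak m}$ injects into $H^n(X^{\geq h+1})_{\mathfrak m}$ and is therefore torsion-free. Together with the hypothesis that $H^\bullet(X^{\geq h})_{\mathfrak m}$ has nontrivial torsion, and with the observation that any torsion class in $H^m(X^{\geq h})_{\mathfrak m}$ for $m > d-h$ must be transported, via the excision coboundary, back to a torsion class in $H^{d-h}(X^{\geq h})_{\mathfrak m}$ (the torsion-free $H^\bullet(X^{\geq h+1})_{\mathfrak m}$ cannot absorb it), we conclude that the smallest $i$ with $H^{d-h+i}(X^{\geq h})_{\mathfrak m, tor} \neq 0$ is exactly $0$.

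For the type statement, let $M \subset H^{d-h}(X^{\geq h})_{\mathfrak m, tor}$ be an irreducible submodule. Since its image in the torsion-free module $H^{d-h}(X^{\geq h+1})_{\mathfrak m}$ is zero, by exactness $M$ is a subquotient of $H^{d-h}_c(X^{=h})_{\mathfrak m}$. Invoking the trichotomy from the proof of the previous lemma on the local component $\Pi_v$ of the contributing automorphic representation --- (i)~$\st_r(\chi) \times \pi'_v$ with $h \leq r \leq d$, (ii)~$\speh_r(\chi) \times \pi'_v$, or (iii)~neither --- case (iii) is excluded because $M$ has nontrivial $GL_d(\OC_v)$-invariants. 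Case (i) with $r = h$ produces a type-$h$ piece already in characteristic zero as a free tempered contribution, hence already absorbed by the free part of $H^{d-h}(X^{\geq h})_{\mathfrak m}$ and incapable of giving rise to $\mathfrak m$-torsion. The remaining cases --- case (i) with $r \geq h+1$, and case (ii) --- contribute pieces of $H^{d-h}_c(X^{=h})_{\mathfrak m}$ that vanish in characteristic zero in $H^{d-h}(X^{\geq h})$ but produce torsion at the integral level: via the congruence structure, in particular the irreducibility of the modulo $l$ reduction $r_l(\speh_h(\chi))$ and the matching of its Satake parameters with those of $\st_{h+1}(\chi')\times \chi_1 \times \cdots$, this torsion satisfies property \textbf{(P)} with type $h+1$, the associated automorphic representation $\Pi_k$ being one with $\Pi_{k,v} \simeq \st_{h+1}(\chi) \times \chi_1 \times \cdots$.

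The main obstacle is making this last step precise: one must produce an integral lattice refinement of the computations of \cite{boyer-compositio} that tracks explicit stable $\Tm_I$-lattices in $H^{d-h}_c(X^{=h})_{\mathfrak m}$ and their reductions modulo $l$, and then combine it with the known modular reduction of Speh representations in order to identify the torsion subquotient with a $\Tm_I$-module of type $h+1$ in the sense of property \textbf{(P)}, ruling out the a priori possibility of type $h$.
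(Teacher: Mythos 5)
Your reduction of the vanishing below middle degree is fine and agrees with the paper: Artin vanishing for the affine stratum $X^{=h}_{I,\bar s_v}$ gives $H^n_c(X^{=h}_{I,\bar s_v},V_{\xi,\overline \Zm_l})=0$ for $n<d-h$ (and torsion-freeness in degree $d-h$), so in lower degrees $H^n(X^{\geq h}_{I,\bar s_v},V_\xi)_{\mathfrak m}$ injects into the torsion-free $H^n(X^{\geq h+1}_{I,\bar s_v},V_\xi)_{\mathfrak m}$. But your argument that the minimal torsion degree is \emph{exactly} $d-h$ has a genuine gap: you assert that a torsion class in $H^m(X^{\geq h}_{I,\bar s_v},V_\xi)_{\mathfrak m}$ with $m>d-h$ is ``transported, via the excision coboundary, back to a torsion class in $H^{d-h}$''. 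No such transport exists: the long exact sequence only shows such a class lifts to $H^m_c(X^{=h}_{I,\bar s_v},V_\xi)_{\mathfrak m}$, which for $m>d-h$ is not known to be torsion free and in any case sits in degree $m$, not $d-h$; nothing in the sequence moves a class down to the middle degree. The paper closes this step with Grothendieck--Verdier (Poincar\'e) duality on the smooth proper $X^{\geq h}_{I,\bar s_v}$, pairing $(\mathfrak m,\xi)$ with $(\mathfrak m^\vee,\xi^\vee)$ (this is why $\mathfrak m^\vee$ was introduced earlier): torsion in degree $d-h+i$ is dual to torsion in degree $d-h+1-i$, and combined with the lower bound just proved this forces the lowest torsion degree to be $d-h$. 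Without a duality input, or some substitute for it, your conclusion does not follow.

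For the second assertion you correctly get that an irreducible $M\subset H^{d-h}(X^{\geq h}_{I,\bar s_v},V_\xi)_{\mathfrak m,tor}$ dies in the torsion-free $H^{d-h}(X^{\geq h+1}_{I,\bar s_v},V_\xi)_{\mathfrak m}$ and hence comes from $H^{d-h}_c(X^{=h}_{I,\bar s_v},V_\xi)_{\mathfrak m}$, and case (iii) is excluded by the invariants; but the decisive point --- which you yourself flag as ``the main obstacle'' and leave open --- is precisely what must be proved, and the mechanism is neither an integral refinement of \cite{boyer-compositio} nor the irreducibility of $r_l(\speh_h(\chi))$. Since $H^{d-h}_c(X^{=h}_{I,\bar s_v},V_\xi)_{\mathfrak m}$ is free and the kernel of its map to $H^{d-h}(X^{\geq h}_{I,\bar s_v},V_\xi)_{\mathfrak m}$ is the image $\Lambda$ of $H^{d-h-1}(X^{\geq h+1}_{I,\bar s_v},V_\xi)_{\mathfrak m}$, the torsion in question is a subquotient of $\Lambda_{sat}/\Lambda$, so every constituent of it is a subquotient of the mod $l$ reduction of a lattice in $\Lambda\otimes\overline\Qm_l$, i.e.\ comes from an automorphic $\Pi$ contributing simultaneously to $H^{d-h}_c(X^{=h}_{I,\bar s_v})$ and to $H^{d-h-1}(X^{\geq h+1}_{I,\bar s_v})$ at maximal level at $v$. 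By the computations recalled in the previous lemma and proposition \ref{prop-temperee}, the only such $\Pi$ have $\Pi_v\simeq\st_{h+1}(\chi_{v,0})\times\chi_{v,1}\times\cdots$, whence type $h+1$: tempered $\st_h$-constituents and Speh constituents do not occur in $H^{d-h-1}(X^{\geq h+1}_{I,\bar s_v})$ at this level and are thereby excluded. Note also that your treatment of case (ii) goes in the wrong direction: a constituent $\speh_h(\chi_v)$ with $\chi_v$ unramified is unramified at $v$, so it could never serve as a witness for property \textbf{(P)}; it has to be discarded (this is the second branch of the previous lemma), not folded into the type $h+1$ conclusion via congruences of Satake parameters.
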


\begin{proof}
Consider the following short exact sequence of perverse sheaves\footnote{As we are dealing with perverse 
sheaves, note the shifts of the grading of cohomology groups.}
\addtocounter{thm}{1}
\begin{multline} \label{eq-sec00}
0 \rightarrow i_{h+1,*} V_{\xi,\overline \Zm_l,|X_{I,\bar s_v}^{\geq h+1}}[d-h-1] \longrightarrow  \\
j^{\geq h}_! j^{\geq h,*} V_{\xi,\overline \Zm_l,|X^{\geq h}_{I,\bar s_v}}[d-h] \longrightarrow 
V_{\xi,\overline \Zm_l,|X^{\geq h}_{I,\bar s_v}}[d-h] \rightarrow 0.
\end{multline}
Indeed as the strata $X^{\geq h}_{I,\bar s_v}$ are smooth and $j^{\geq h}$ is affine,
the three terms of this exact sequence are perverse and even free in the sense
of the natural torsion theory from the linear $\overline \Zm_l$-linear structure, see 
\cite{boyer-torsion} \S 1.1-1.3. 

Moreover from Artin's theorem,
see for example theorem 4.1.1 of \cite{BBD}, using the affiness of $X^{=h}_{I,\bar s_v}$, 
we deduce that 
$$H^i(X_{I,\bar s_v}^{\geq h},j^{\geq h}_! j^{\geq h,*} 
V_{\xi,\overline \Zm_l,|X^{\geq h}_{I,\bar s_v}}[d-h])$$ 
is zero for every $i<0$ 
and without torsion for $i=0$, so that for $i>0$, we have
\addtocounter{thm}{1}
\begin{equation} \label{eq-sec}
0 \rightarrow H^{-i-1}(X^{\geq h}_{I,\bar s_v},V_{\xi,\overline \Zm_l}[d-h]) \longrightarrow
H^{-i}(X^{\geq h+1}_{I,\bar s_v},V_{\xi,\overline \Zm_l}[d-h-1]) \rightarrow 0,
\end{equation}
and for $i=0$,
\addtocounter{thm}{1}
\begin{multline} \label{eq-sec0}
0 \rightarrow H^{-1}(X^{\geq h}_{I,\bar s_v},V_{\xi,\overline \Zm_l}[d-h]) \longrightarrow
H^{0}(X^{\geq h+1}_{I,\bar s_v},V_{\xi,\overline \Zm_l}[d-h-1]) \longrightarrow \\
H^0(X^{\geq h}_{I,\bar s_v},j^{\geq h}_! j^{\geq h,*} V_{\xi,\overline \Zm_l}[d-h] ) \longrightarrow
H^{0}(X^{\geq h}_{I,\bar s_v},V_{\xi,\overline \Zm_l}[d-h]) \rightarrow \cdots
\end{multline}
Thus if the torsion of $H^i(X^{\geq h}_{I,\bar s_v},V_{\xi, \overline{\mathbb Z}_l}[d-h])$ is non trivial then
$i \geq 0$ and thanks to Grothendieck-Verdier duality, the smallest such indice 
is  necessary $i=0$. Furthermore the torsion of 
$H^0(X^{\geq h}_{I,\bar s_v},V_{\xi, \overline{\mathbb Z}_l}[d-h])$ raises both into 
$H^0_c(X^{=h}_{I,\bar s_v},V_{\xi, \overline{\mathbb Z}_l}[d-h])$ and $H^0(X^{\geq h+1}_{I,\bar s_v},
V_{\xi, \overline{\mathbb Z}_l}[d-h])$, which are both free. Thus by the previous lemma, the torsion of 
$H^0(X^{\geq h}_{I,\bar s_v},V_{\xi, \overline{\mathbb Z}_l}[d-h])_{\mathfrak m}$ verifies property
\textbf{(P)} being of type $h+1$.
\end{proof}

\begin{lemma}
With previous notations, for all $1 \leq h' \leq h$, the greatest $i$ such that the torsion
of $H^{d-h'-i}(X^{\geq h'}_{I,\bar s_v},V_{\xi, \overline{\mathbb Z}_l})_{\mathfrak m,tor}$ is non zero, equals $h-h'$.
Moreover this torsion verifies property \textbf{(P)} being of type $h+1$.
\end{lemma}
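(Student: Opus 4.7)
The strategy is to mimic the argument of the previous lemma at every intermediate stratum and then chain the resulting comparisons back to the already established case $h' = h$. For $h' = h$ the statement is precisely the conclusion of the preceding lemma (together with the vanishing of $\mathfrak m$-torsion in degrees $\leq d - h - 1$ that was derived in its proof, via the isomorphism \eqref{eq-sec} and the injection \eqref{eq-sec0}, combined with the maximality of $h$). For $h' < h$ I would apply the analogue of \eqref{eq-sec00} of perverse sheaves with $h$ replaced successively by $h'' = h', h'+1, \ldots, h-1$.

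At each such level $h''$, the three terms of the sequence remain perverse and free in the sense of \cite{boyer-torsion}, so exactly the same application of Artin vanishing to the affine stratum $X^{=h''}_{I,\bar s_v}$ as in the derivation of \eqref{eq-sec} and \eqref{eq-sec0} produces an isomorphism
$$H^k(X^{\geq h''}_{I,\bar s_v}, V_{\xi,\overline{\mathbb Z}_l}) \cong H^k(X^{\geq h''+1}_{I,\bar s_v}, V_{\xi,\overline{\mathbb Z}_l})$$
for every $k \leq d - h'' - 2$, together with, in degree $k = d - h'' - 1$, an injection whose cokernel embeds into the torsion-free module $H^0(X^{\geq h''}_{I,\bar s_v}, j^{\geq h''}_! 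V_\xi[d-h''])$.

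For every degree $k \leq d - h - 1$, the inequality $k \leq d - h'' - 2$ is satisfied at each step $h'' \in \{h', \ldots, h-1\}$, so iterating gives an isomorphism $H^k(X^{\geq h'}_{I,\bar s_v}, V_\xi) \cong H^k(X^{\geq h}_{I,\bar s_v}, V_\xi)$, and the latter is torsion-free by the previous lemma. In the notation of the statement this forces the possible torsion indices to satisfy $i \leq h - h'$.

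For the extremal degree $k = d - h$ (i.e. $i = h - h'$) the chain of isomorphisms extends only up to $h-1$, since $d-h \leq d - h''- 2$ requires $h'' \leq h - 2$; the final comparison from $h-1$ to $h$ is merely the injection $H^{d-h}(X^{\geq h-1}_{I,\bar s_v}, V_\xi) \hookrightarrow H^{d-h}(X^{\geq h}_{I,\bar s_v}, V_\xi)$. The main technical point is to see that this injection induces a bijection on $\mathfrak m$-torsion: its cokernel being torsion-free forces every torsion class on the right to come, uniquely by injectivity, from a torsion class on the left. Combining with the chain of isomorphisms, $H^{d-h}(X^{\geq h'}_{I,\bar s_v}, V_\xi)_{\mathfrak m, \mathrm{tor}}$ is isomorphic to $H^{d-h}(X^{\geq h}_{I,\bar s_v}, V_\xi)_{\mathfrak m, \mathrm{tor}}$, which by the previous lemma is non-zero and verifies property \textbf{(P)} of type $h+1$. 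Since \textbf{(P)} is stable under isomorphism, this yields both the attainment of $i = h-h'$ and the asserted type, finishing the proof.
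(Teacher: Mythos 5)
Your proof is correct and follows essentially the same route as the paper: the paper argues by descending induction on $h'$, applying the exact sequence \eqref{eq-sec00} at level $h'$ and transferring the conclusion from $X^{\geq h'+1}$ via \eqref{eq-sec}. Your treatment of the boundary degree $d-h$ through the injection of \eqref{eq-sec0} with torsion-free cokernel is exactly the point the paper's terse proof leaves implicit, so the write-up is a faithful (and slightly more detailed) version of the intended argument.
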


\begin{proof}
We argue by induction on $h'$ from $h$ to $1$. The case $h'=h$ follows directly from
the previous lemma so that we suppose the result true up to $h'+1$ and consider
the cas of $h'$. Resume the spectral sequences (\ref{eq-sec00}) with $h'$. 
Then the result follows from (\ref{eq-sec}) and the induction hypothesis.
\end{proof}

Using the smooth base change theorem, the case $h'=1$ of the previous lemma,
then gives the following proposition.

\begin{propo} \label{prop-p1}
Let $i$ be maximal, if it exists, such that the torsion submodule of 
$H^{d-1-i}(X_{I,\bar \eta_v},V_{\xi, \overline{\mathbb Z}_l})_{\mathfrak m}$ is non zero. Then it verifies
property \textbf{(P)} being of type $i+2$.
\end{propo}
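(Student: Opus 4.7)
The plan is to deduce this proposition essentially immediately from the $h'=1$ case of the preceding lemma combined with the smooth base change theorem, which was already noted at the beginning of this section. The substantive work has been done in the inductive lemma, so the present statement reduces to a translation between the generic fiber and the open stratum of the special fiber.

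First, I would invoke smooth base change to obtain a $\Tm_I$-equivariant isomorphism
$$H^{d-1-i}(X_{I,\bar \eta_v}, V_{\xi,\overline{\mathbb Z}_l})_{\mathfrak m} \simeq H^{d-1-i}(X^{\geq 1}_{I,\bar s_v}, V_{\xi,\overline{\mathbb Z}_l})_{\mathfrak m},$$
the Hecke action being defined away from $v$. Since forming torsion submodules and localizing at $\mathfrak m$ commute with this isomorphism, the maximal integer $i$ appearing in the proposition is the same whether computed on the generic fiber or on $X^{\geq 1}_{I,\bar s_v}$.

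Next, I would apply the preceding lemma with $h'=1$: it identifies this maximal $i$ with $h-1$, where $h$ is the largest index (fixed earlier in the section) for which non-trivial torsion exists in some $H^{\bullet}(X^{\geq h}_{I,\bar s_v}, V_{\xi,\overline{\mathbb Z}_l})_{\mathfrak m}$. Hence $h=i+1$, and the very same lemma asserts that this torsion verifies property \textbf{(P)} while being of type $h+1=i+2$, which is the conclusion sought.

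The main step here is therefore cosmetic bookkeeping; no serious obstacle remains at this stage. The real content was absorbed by the earlier induction on $h'$, which propagated the "type $h+1$" conclusion outward from the deepest stratum using the perverse short exact sequence (\ref{eq-sec00}) and its associated long exact cohomology sequences (\ref{eq-sec}), (\ref{eq-sec0}). That induction is what makes the present proposition a one-line corollary.
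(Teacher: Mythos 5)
Your proposal is correct and follows exactly the paper's own (one-line) argument: the proposition is stated as the consequence of the $h'=1$ case of the preceding lemma together with the smooth base change isomorphism $H^\bullet(X_{I,\bar \eta_v},V_{\xi,\overline{\mathbb Z}_l}) \simeq H^\bullet(X^{\geq 1}_{I,\bar s_v},V_{\xi,\overline{\mathbb Z}_l})$, giving maximal index $i=h-1$ and type $h+1=i+2$. Nothing further is needed.
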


\begin{corol} \label{coro-principal}
Consider a maximal ideal $\mathfrak m$ of $\Tm_I$ and $i$ maximal, if it exists,
such that the torsion of $H^{d-1-i}(X_{I,\bar \eta},V_{\xi, \overline{\mathbb Z}_l})_{\mathfrak m}$ is non zero. 
Then there exists a set $\{ \Pi(v): ~v \in \Spl(I) \}$ of irreducible automorphic
$\xi$-cohomological representations such that
\begin{itemize}
\item for any $w \in \Spl(I)$ different from $v$, the local component at $w$ of 
$\Pi(v)$ is unramified with modulo $l$ Satake parameters given by $S_{\mathfrak m}$; 

\item the local component $\Pi(v)_v$ of $\Pi$ at $v$ is isomorphic to a representation of the
following shape
$$\st_{i+2}(\chi_{v,0}) \times \chi_{v,1} \times \cdots \times \chi_{v,d-i-2},$$ 
where $\chi_{v,0},\cdots,\chi_{v,d-i-2}$ are unramified characters of $F_w$.
\end{itemize}
\end{corol}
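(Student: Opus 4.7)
The corollary is essentially the final packaging of Proposition~\ref{prop-p1}, so the plan is to translate the abstract property \textbf{(P)} into the concrete statement about $\Pi(v)$. First I would fix $v \in \Spl(I)$ and appeal to the smooth base change isomorphism $H^{d-1-i}(X_{I,\bar\eta},V_{\xi,\overline{\Zm}_l}) \simeq H^{d-1-i}(X_{I,\bar\eta_v},V_{\xi,\overline{\Zm}_l})$, so the hypothesis of Proposition~\ref{prop-p1} is valid with the same maximal $i$. Proposition~\ref{prop-p1} then yields that the $\mathfrak m$-localized torsion $H^{d-1-i}(X_{I,\bar\eta_v},V_{\xi,\overline{\Zm}_l})_{\mathfrak m, tor}$ verifies property \textbf{(P)} being of type $i+2$.

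Next I would unfold the definition of property \textbf{(P)}. The filtration supplied by \textbf{(P)} must have at least one nonzero graded piece $\gr^k$ (since the torsion module is nonzero), and since the whole module is torsion, this $\gr^k$ falls into the ``torsion'' clause: it is a nonzero subquotient of $\Gamma/\Gamma'$ for two stable $\Tm_I$-lattices in $(\Pi^{\infty,v}_k)^{I^v} \otimes \widetilde\Pi_{k,v}^{GL_d(\OC_v)}$, where $\Pi_k$ is an automorphic $\xi$-cohomological representation appearing in the cohomology with $\Pi_{k,v} \simeq \st_{i+2}(\chi_{v,0}) \times \chi_{v,1} \times \cdots \times \chi_{v,d-i-2}$ by the ``type $i+2$'' condition. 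I would set $\Pi(v):=\Pi_k$, which gives the desired shape at $v$ automatically.

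The second bullet---unramifiedness at $w \neq v$ and the match of Satake parameters with $S_{\mathfrak m}(w)$---is then obtained as follows. For $w \in \Spl(I)\setminus\{v\}$, the level $I$ is maximal at $w$, so $\Pi(v)_w$ has a nonzero $GL_d(\OC_w)$-fixed vector and is therefore unramified. The spherical Hecke algebra $\Tm_w$ acts on the lattice $\Gamma$ by the Satake parameters of $\Pi(v)_w$; because the subquotient $\gr^k$ is a nonzero $\mathfrak m$-torsion module, the image of any $T \in \Tm_w$ in $\Tm_I/\mathfrak m$ must equal the evaluation of $T$ on the Satake data of $\Pi(v)_w$ reduced modulo $l$. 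By the Chinese remainder identification of $\Tm_I/\mathfrak m$ with the collection of Satake data $S_{\mathfrak m}$, this is precisely the statement that the modulo $l$ Satake parameters of $\Pi(v)$ at $w$ coincide with $S_{\mathfrak m}(w)$.

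Finally, I would vary $v$ through $\Spl(I)$: the three steps above go through verbatim for any $v$, producing the entire family $\{\Pi(v): v \in \Spl(I)\}$. The main conceptual obstacle is really concentrated in Proposition~\ref{prop-p1} (and before it, the two preliminary lemmas governing how the type propagates through the Newton stratification via the perverse triangle~(\ref{eq-sec00})); once that is granted, the passage from property~\textbf{(P)} to the corollary is essentially bookkeeping. The only subtle point worth verifying carefully is that the torsion graded piece really does produce a representation of type $i+2$ and not type $i+1$---that is forced by the maximality of $i$, since a type-$i+1$ contribution would come paired, via the preceding lemma, with nonzero torsion in some $H^{d-1-i'}_{\mathfrak m, tor}$ with $i'>i$, contradicting our choice of $i$.
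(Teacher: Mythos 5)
Your proposal is correct and follows essentially the same route as the paper: identify the cohomology over $\bar\eta$ with that at $v$ by (smooth) base change, invoke Proposition~\ref{prop-p1} to get property \textbf{(P)} of type $i+2$, and unwind that property to extract $\Pi(v)$ for each $v\in\Spl(I)$. You merely spell out the bookkeeping (unramifiedness at $w\neq v$ and the matching of the modulo $l$ Satake parameters with $S_{\mathfrak m}$ via the $\mathfrak m$-torsion lattice quotient) that the paper compresses into ``verifying the required properties''.
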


\rem in the first point of the previous corollary, we can of course say that for all finite places not dividing
$I \cup \bad \cup \{ l \}$, and different from $v$ in the sense of the formula (\ref{eq-facteur-v}), the modulo $l$
Satake parameters of $\Pi(v)$ and $\Pi$ are the same: but the place $v$ where the level increase must belong to $\Spl(I)$.
For $\Pi_1$ and $\Pi_2$ irreducible representations of $G(\Am)$ and $S$
the set of finite places of ramification of either $\Pi_1$ or $\Pi_2$, if the modulo $l$ Satake parameters outside $S \cup \bad$
of $\Pi_1$ and $\Pi_2$ are the same then we say that they are weakly congruent.

\begin{proof}
Consider an irreducible $\Tm_I$-submodule $M$ of $H^{d-1-i}(X_{I,\bar \eta},V_{\xi, \overline{\mathbb Z}_l})_{\mathfrak m,tor}$.
For any place $v \in \Spl(I)$, thanks to the smooth base change theorem, we have
$$H^{d-1-i}(X_{I,\bar \eta},V_{\xi, \overline{\mathbb Z}_l})_{\mathfrak m} \simeq 
H^{d-1--i}(X^{\geq 1}_{I,\bar s_v},V_{\xi, \overline{\mathbb Z}_l})_{\mathfrak m}.$$
From the previous proposition, this module $M$ verifies property \textbf{(P)} 
being of type $i+2$ so that it exists an automorphic irreducible $\xi$-cohomological
representation $\Pi(v)$ verifying the required properties.
\end{proof}

\section{Completed cohomology and torsion classes}
\renewcommand{\thethm}{\arabic{section}.\arabic{thm}}
\renewcommand{\theequation}{\arabic{section}.\arabic{thm}}
\label{para-torsion}

Given a level $I^l \in \IC$ maximal at $l$, recall that the completed cohomology groups are 
$$\widetilde{H}^i_{I^l}(V_{\xi,\OC/\lambda^n}):=\lim_{\atopp{\longrightarrow}{I_l}} 
H^i(X_{I^lI_l},V_{\xi,\OC/\lambda^n})$$
and
$$\widetilde{H}^i_{I^l}(V_{\xi,\OC}):=\lim_{\atopp{\longleftarrow}{n}} \widetilde{H^i}_{I^l}
(V_{\xi,\OC/\lambda^n}),$$
where $\OC$ is the ring of integers of a finite extension of $\Qm_l$ on which the representation $\xi$
is defined.

\begin{nota} When $\xi=1$ is the trivial representation, we will denote
$$\widetilde{H}^i_{I^l}:=\widetilde{H}^i_{I^l}(V_{1,\OC}) \otimes_\OC \overline \Zm_l.$$
\end{nota}

\rem for $n$ fixed, there exists an open compact subgroup $I_l(n)$ such that, using the notations below
\ref{nota-Vxi}, every $I_l \subset I_l(n)$ acts trivialy on
$W_{\xi,\OC} \otimes_\OC \OC/\lambda^n$. We then deduce that the completed cohomology groups 
don't depend of the choice of $\xi$ in the sense where, see theorem 2.2.17 of \cite{emerton-invent}:
\addtocounter{thm}{1}
\begin{equation} \label{eq-completee0}
\widetilde{H}^i_{I^l}(V_{\xi,\OC})  \otimes_\OC \overline \Zm_l \simeq \widetilde{H}^i_{I^l}\otimes W_\xi
\end{equation}
where $G(\Qm_l)$ acts diagonally on the right side.

Scholze, see \cite{scholze-torsion} proposition IV.2.2, has showed that the 
$\widetilde{H}^i_{I^l}(V_{\xi,\OC})$ are trivial for all $i>d-1$. In our situation we can prove that for all
divisor $s$ of $d$, there are non zero for $i=d-s$: the argument is quite simple but it uses some
particular results about entire notions of intermediate extension of Harris-Taylor's local systems.
As we don't really need such precision, we only prove the following property.

\begin{propo} \label{prop-completee-nonnul}
For each divisor $s$ of $d=sg$ and for a level $I^l$ outside $l$ small enough, there exists 
$i \leq d-s$ such that 
$\widetilde{H}^{i}_{I^l} \otimes_{\overline \Zm_l} \overline \Qm_l$ has, as a 
$GL_d(F_v)$-representation, an irreducible quotient with degeneracy depth equals to $s$.
\end{propo}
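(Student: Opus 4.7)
The plan is to find, for each divisor $s$ of $d = sg$, a $1$-cohomological automorphic representation of $G(\Am)$ with local component of degeneracy depth $s$ at $v$, which contributes to the cohomology of $X_{I^l I_l}$ with trivial coefficients in some degree $i \leq d-s$, and then to descend this contribution to the completed cohomology via the Hochschild-Serre spectral sequence.

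First I would exhibit such a $\Pi$ with $\Pi_v \simeq \speh_s(\pi_v)$, obtained via Jacquet-Langlands transfer from a global parameter on $GL_d/F$ built as an isobaric sum of $s$ Langlands twists of a single cuspidal automorphic representation of $GL_g/F$. By the explicit description in \cite{boyer-compositio} of the $\overline \Qm_l$-cohomology of $X_I$ with trivial coefficients, compatible with proposition \ref{prop-non-temperee}, such a $\Pi$ will contribute non trivially to $H^i(X_{I^l I_l}, V_{1, \overline \Qm_l})$ for some $i \leq d-s$, provided $I^l I_l$ is chosen small enough so that $(\Pi^\infty)^{I^l I_l} \neq 0$.

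Next I would invoke the Hochschild-Serre spectral sequence
$$E_2^{p,q} = H^p(I_l, \widetilde{H}^q_{I^l}(V_{1,\OC})) \Longrightarrow H^{p+q}(X_{I^l I_l}, V_{1,\OC})$$
tensored with $\overline \Qm_l$. The $\Pi^{\infty,v}$-isotypic piece of the abutment in degree $i \leq d-s$ must arise from some $E_\infty^{p,q}$ with $p+q = i$; since $p \geq 0$, this forces the existence of $q \leq d-s$ such that the $GL_d(F_v)$-local component $\Pi_v$, of degeneracy depth $s$, appears as a subquotient of $\widetilde{H}^q_{I^l}(V_{1,\OC}) \otimes \overline \Qm_l \simeq \widetilde{H}^q_{I^l} \otimes \overline \Qm_l$ via the weight-independence (\ref{eq-completee0}).

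The last, and most delicate, step will be to upgrade this subquotient into an irreducible quotient of degeneracy depth $s$. For this I would exploit that $\widetilde{H}^q_{I^l} \otimes \overline \Qm_l$ is smooth admissible as a $GL_d(F_v)$-representation, since the $v$-component of $I^l$ is a fixed open compact subgroup of $GL_d(F_v)$, and that the Hecke-isotypic decomposition of each finite-level $\overline \Qm_l$-cohomology group passes to the direct limit on $I_l$, so that $\Pi_v$ occurs as a direct summand and hence as an irreducible quotient. The main obstacle is precisely to control this classical automorphic decomposition through the two successive limits (first over $\lambda^n$, then over $I_l$) in the definition of the completed cohomology, and to guarantee that the Jacquet-Langlands local component $\Pi_v$ of degeneracy depth $s$ survives intact into the free quotient of $\widetilde{H}^q_{I^l}$.
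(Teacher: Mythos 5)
Your first three paragraphs reproduce, in substance, the paper's own argument: pick an automorphic $\Pi$, cohomological, whose local component at $v$ is $\speh_s(\pi_v)$, observe via the explicit computations of \cite{boyer-compositio} that its isotypic part of the finite-level $\overline \Qm_l$-cohomology is non trivial in degree $d-s$ once $I^lI_l$ is small enough, and then feed this into the Hochschild--Serre spectral sequence (\ref{eq-ss-completee}) to force non triviality of the completed cohomology in some degree $q\leq d-s$. One difference of detail: the paper does not insist that $\Pi$ be cohomological for the trivial coefficient system; it allows any algebraic $\xi$ and uses the weight-independence (\ref{eq-completee0}), which is already built into the coefficients $\widetilde H^j_{I^l}\otimes V_{\xi,\overline \Zm_l}$ of the spectral sequence. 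Your restriction to a $1$-cohomological $\Pi$ makes the existence input more rigid (the cuspidal representation of $GL_{g}$ underlying your isobaric/Speh construction must have its infinitesimal character spaced by $s$ so that the Speh parameter has the infinitesimal character of the trivial representation), whereas the paper can take $\Pi$ of whatever weight is convenient; this is a minor point and your construction sketch is the right one.

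The genuine problem is your last paragraph. The space $\widetilde H^q_{I^l}\otimes_{\overline \Zm_l}\overline \Qm_l$ is not a smooth admissible $GL_d(F_v)$-representation in the sense you use: if the $v$-component of $I^l$ is a fixed compact open subgroup then $GL_d(F_v)$ does not act at all (only the Hecke algebra at $v$ does), and if instead one also passes to the limit over levels at $v$, the invariants under any compact open subgroup at $v$ are still infinite dimensional over $\overline \Qm_l$, because the level at $l$ is infinite; completed cohomology is admissible only in Emerton's sense, as a $G(\Qm_l)$-representation over $\OC$. Likewise, the classical Hecke-isotypic decomposition of the finite-level $\overline \Qm_l$-cohomology does not ``pass to the limit'' through the two limits defining $\widetilde H^q_{I^l}$ (first the inductive limit over $I_l$ of torsion coefficients, then the projective limit over $n$): after inverting $l$ the classical automorphic vectors are not a direct summand, so the chain ``subquotient, hence direct summand, hence irreducible quotient'' does not go through as stated, and this was precisely the delicate point you needed. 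To be fair, the paper itself stops at ``the result then follows from the spectral sequence'' and explicitly announces that it only proves an imprecise version of the expected statement, so your first three paragraphs already contain everything the paper actually establishes; but the mechanism you propose for upgrading the contribution to an irreducible quotient of degeneracy depth $s$ is incorrect as written and would have to be replaced, for instance by arguing at the level of the localization at the maximal ideal $\mathfrak m$ of the Hecke algebra attached to $\Pi$ rather than through a $GL_d(F_v)$-direct summand.
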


\begin{proof}
Recall the Hochschild-Serre spectral sequence allowing to compute the cohomology at finite
level from completed one
\addtocounter{thm}{1}
\begin{equation} \label{eq-ss-completee}
E_2^{i,j}=H^i(I_l, \widetilde H^j_{I_l} \otimes V_{\xi, \overline{\mathbb Z}_l}) \Rightarrow H^{i+j}(X_{I^lI_l},
V_{\xi, \overline{\mathbb Z}_l}).
\end{equation}
Let $v \in \Spl(I^l)$ be a fixed place over some prime number $p \neq l$.
Consider then a divisor $s$ of $d=sg$ and an automorphic representation $\Pi$ which is
cohomological relatively to a algebraic representation $\xi$ of $G$ and such that its local component
at the place $v$ is isomorphic to $\speh_s(\pi_v)$ where $\pi_v$ is an irreducible cuspidal representation
of $GL_g(F_v)$. As before we choose a finite level $I^l$ outside $l$ so that $\Pi$ has
non trivial invariant vectors under $I^l$. According to \cite{boyer-compositio}, the
$\Pi^\oo$-isotypic factor of the $\overline \Qm_l$-cohomology group of indice $d-s$ is non trivial
for $I=I^lI_l$ with $I_l$ small enough. The result then follows from the spectral sequence
(\ref{eq-ss-completee}).
\end{proof}

Let  $\widehat{H}^i_{I^l}(V_{\xi,\OC})$ be the $p$-adic completion of $H^i_{I^l}(V_{\xi,\OC}):=
{\displaystyle \lim_{\atopp{\longrightarrow}{I_l}} } H^i(X_{I^lI_l},V_{\xi,\OC})$ that is
$$\widehat{H}^i_{I^l}(V_{\xi,\OC})=\lim_{\atopp{\longleftarrow}{n}} \Bigl (
\lim_{\atopp{\longrightarrow}{I_l}} H^i(X_{I^lI_l},V_{\xi,\OC}[d-1]) ~/~ \lambda^n  H^i(X_{I^lI_l},V_{\xi,\OC}[d-1]) \Bigr ).$$
It kills the $p$-divisible part of $H^i_{I^l}(V_{\xi,\OC})$. Consider also
the $p$-adic Tate module of $H^i_{I^l}(V_{\xi,\OC})$
$$T_pH^i_{I^l}(V_{\xi,\OC}):= \lim_{\atopp{\longleftarrow}{n}} H^i_{I^l}(V_{\xi,\OC}) [\lambda^n].$$
whom knows only about torsion. Recall then the short exact sequence
\addtocounter{thm}{1}
\begin{equation}\label{eq-sec-fond} 
0 \rightarrow \widehat{H}^i_{I^l}(V_{\xi,\OC}) \longrightarrow \widetilde{H}^i_{I^l}(V_{\xi,\OC}) \longrightarrow 
T_pH^{i+1}_{I^l}(V_{\xi,\OC}) \rightarrow 0.
\end{equation}

When $\xi$ is a regular algebraic representation, the cohomology of $X_I$ with coefficients in $V_{\xi, \overline{\mathbb Q}_l}$, 
is concentrated in middle degree
and so $\widehat{H}^i_{I^l}(V_{\xi,\overline{\mathbb Q}_l})$ is trivial for all $i \neq d-1$. Let $s \geq 2$ be a divisor of $d$ and $i \leq d-s<d-1$
such that, thanks to the previous proposition, for some finite level $I^l$ outside $l$ small enough
$$\widetilde{H}^i_{I^l}(V_{\xi, \overline{\mathbb Z}_l}) \otimes_{ \overline{\mathbb Z}_l} \overline{\mathbb Q}_l \simeq 
T_pH^{i+1}_{I^l}(V_{\xi, \overline{\mathbb Z}_l})$$
is non trivial. It means then that for all $n \geq 1$, there exists an open compact subgroup $I_l$ small enough, 
for which $H^{i+1}(X_{I^lI_l},V_{\xi, \overline{\mathbb Z}_l})$ has a class of exactly $\lambda^n$-torsion so that through
the process of completed cohomology, i.e. when you first take the limit on $I_l$ and then on $\lambda^n$,
the reductions modulo $\lambda^n$ for varying $n$, of these torsion classes give torsion free classes generating an automorphic
representation $\Pi$ with depth of degeneracy equals to $s$ and trivial weight.
From proposition \ref{prop-completee-nonnul}, for $\mathfrak m$ a maximal ideal of
$\Tm_I$ associated to this $\Pi$, there exists a set $\{ \Pi(v); ~ v \in \Spl(I) \}$ such that the properties
of corollary \ref{coro-principal} hold:
\begin{itemize}
\item in particular these $\Pi(v)$ are tempered representations, non isomorphic and weakly congruent in twos; 
there are all of the same regular weight;

\item each of these tempered irreducible representations $\Pi(v)$ of regular weight $\xi$ is also weakly 
congruent with $\Pi$,
an irreducible representation of trivial weight with degeneracy depth $>1$.

\end{itemize}
More generally using the isomorphism (\ref{eq-completee0}) for any $\xi$ not necessarily trivial or regular, take as before 
$i\leq d-s$ minimal such
that the free quotient of $\widetilde{H}^{i}_{I^l}$ has an irreducible quotient $\Pi$ with depth of degeneracy $s$ and let 
$\mathfrak m$ be the maximal ideal of $\Tm_{I^l}$ associated to such a $\Pi$.
Thus for any irreducible algebraic representation $\xi$ 
we have again $\widetilde{H}^{i}_{I^l}(V_{\xi,\OC})_{\mathfrak m}  \neq (0)$ so that for every 
$I_l$ small enough and for all $n$, we have 
$H^{i}(X_{I^lI_l},V_{\xi,\OC/(\lambda^n)})_{\mathfrak m} \neq (0)$ so, using (\ref{eq-sec-fond}),
\begin{itemize}
\item[(i)] either $\widehat{H}^i_{I^l}(V_{\xi,\OC})$ is non trivial so that there exists
a $\xi$-cohomological automorphic representation $\Pi'$ which is weakly congruent
with $\Pi$: note that, by minimality of $i$, 
such a $\Pi'$ is necessary of degeneracy depth $s$;

\item[(ii)] or, $T_pH^{i+1}_{I^l}(V_{\xi,\OC}) \neq (0)$ so thanks to 
proposition \ref{prop-p1} there exists an $\xi$-cohomo\-logical automorphic tempered representation
$\Pi'$ whose modulo $l$ Satake parameters at places of $\unr(I) \backslash \{ v \}$ are given by 
$\mathfrak m$. 

\end{itemize}
So for any weights $\xi_1 \neq \xi_2$, we can construct weakly automorphic congruences between representations $\Pi'_1$
and $\Pi'_2$ respectively of weight $\xi_1$ and $\xi_2$, governed by a maximal ideal $\mathfrak m$ attached to some 
irreducible automorphic representation, cohomological for the trivial character, with degeneracy depth $s \geq 2$.
Concerning the degeneracy depth of $\Pi'_1$ and $\Pi'_2$ it is equal to $s$ or $1$ according to they fall in case (i) or (ii).

\noindent \textit{Example}: consider the most trivial case where $\xi_1$ is the trivial character and $\xi_2$ is a regular one and
take $\mathfrak m$ the maximal ideal associated to the trivial representation $\Pi$ of $G(\Am)$ with degeneracy depth $s=d$ which
is trivially, as $\widetilde H^{0}_{I^l} \simeq \Zm_l$, $\xi_1$-cohomological. As $\xi_2$ is regular we are then in case (ii), that is
for any deep enough $I_l$, the torsion of $H^{1}(X_{I^lI_l},V_{\xi,\OC})$ is non trivial. 
Thanks to proposition \ref{prop-p1}, this torsion raises
in characteristic zero to a tempered automorphic representation $\xi$-cohomological $\Pi$ such that:
\begin{itemize}
\item its local component at $v$ is a Steinberg representation $\st_d(\chi_v)$ with $\chi_v$ congruent
to the trivial character modulo $l$,

\item its local Satake parameters at every place of $\unr(I) \backslash \{ v \}$ are those of the trivial
character of $G(\Am)$.
\end{itemize}

\section*{Acknowledgements}

The author would like to thanks the referee for his numerous remarks which clearly contribute to improve the first version of this paper. 
From the very beginning, I've benefit from Michael Harris' interest to my work and I take this opportunity to thanks him again.

\bibliographystyle{mrl}

\def\cftil#1{\ifmmode\setbox7\hbox{$\accent"5E#1$}\else
  \setbox7\hbox{\accent"5E#1}\penalty 10000\relax\fi\raise 1\ht7
  \hbox{\lower1.15ex\hbox to 1\wd7{\hss\accent"7E\hss}}\penalty 10000
  \hskip-1\wd7\penalty 10000\box7} \def\cprime{$'$}


\begin{thebibliography}{10}

\bibitem{BV}
N.~Bergeron and A.~Venkatesh, \emph{The asymptotic growth of torsion homology
  for arithmetic groups}, Journal of the Institute of Mathematics of Jussieu
  \textbf{12, Issue 02} (2013) 391--447.

\bibitem{BBD}
J.~Bernstein, A.~Beilinson, and P.~Deligne, \emph{Faisceaux pervers}, in
  Analyse et topologie sur les espaces singuliers, Asterisque 100 (1982).

\bibitem{boyer-invent2}
P.~Boyer, \emph{Monodromie du faisceau pervers des cycles \'evanescents de
  quelques vari\'et\'es de {S}himura simples}, Invent. Math. \textbf{177}
  (2009), no.~2,  239--280.

\bibitem{boyer-compositio}
---{}---{}---, \emph{Cohomologie des syst\`emes locaux de {H}arris-{T}aylor et
  applications}, Compositio \textbf{146} (2010), no.~2,  367--403.

\bibitem{boyer-torsion}
---{}---{}---, \emph{Filtrations de stratification de quelques vari\'et\'es de
  {S}himura simples}, Bulletin de la SMF \textbf{142, fascicule 4} (2014)
  777--814.

\bibitem{boyer-aif}
---{}---{}---, \emph{Congruences automorphes et torsion dans la cohomologie
  d'un syst\`eme local d'{H}arris-{T}aylor}, Annales de l'Institut Fourier
  \textbf{65 no. 4} (2015) 1669--1710.

\bibitem{boyer-imj}
---{}---{}---, \emph{Sur la torsion dans la cohomologie des vari\'et\'es de
  {Sh}imura de {K}ottwitz-{H}arris-{T}aylor}, Journal of the Institute of
  Mathematics of Jussieu  (2017) 1--19.

\bibitem{boyer-mazur}
---{}---{}---, \emph{Principe de {M}azur pour ${U}(1,d)$},
  https://arxiv.org/abs/1507.02421  (2017) 1--26.

\bibitem{scholze-cara}
A.~Caraiani and P.~Scholze, \emph{On the generic part of the cohomology of
  compact unitary {S}himura varieties}, Ann. of Math. (2) \textbf{186} (2017),
  no.~3,  649--766.

\bibitem{dat-jl}
J.-F. Dat, \emph{Un cas simple de correspondance de {J}acquet-{L}anglands
  modulo $l$}, Proc. London Math. Soc. 104  (2012) 690--727.

\bibitem{emerton-invent}
M.~Emerton, \emph{On the interpolation of systems of eigenvalues attached to
  automorphic {H}ecke eigenforms}, Invent. Math. \textbf{164} (2006), no.~1,
  1--84.

\bibitem{h-t}
M.~Harris, R.~Taylor, The geometry and cohomology of some simple {S}himura
  varieties, Vol. 151 of \emph{Annals of Mathematics Studies}, Princeton
  University Press, Princeton, NJ (2001).

\bibitem{ito2}
T.~Ito, \emph{Hasse invariants for somme unitary {S}himura varieties}, Math.
  Forsch. Oberwolfach report 28/2005  (2005) 1565--1568.

\bibitem{scholze-torsion}
P.~Scholze, \emph{On torsion in the cohomology of locally symmetric varieties},
  Ann. of Math. (2) \textbf{182} (2015), no.~3,  945--1066.

\bibitem{vigneras-induced}
M.-F. Vign{\'e}ras, \emph{Induced {$R$}-representations of {$p$}-adic reductive
  groups}, Selecta Math. (N.S.) \textbf{4} (1998), no.~4,  549--623.

\bibitem{vigneras-langlands}
---{}---{}---, \emph{Correspondance de {L}anglands semi-simple pour {${\rm
  GL}(n,F)$} modulo {$l \neq p$}}, Invent. Math. \textbf{144} (2001), no.~1,
  177--223.

\bibitem{zelevinski2}
A.~V. Zelevinsky, \emph{Induced representations of reductive {${p}$}-adic
  groups. {II}. {O}n irreducible representations of {${\rm GL}(n)$}}, Ann. Sci.
  \'Ecole Norm. Sup. (4) \textbf{13} (1980), no.~2,  165--210.

\end{thebibliography}

\end{document}